%template file for amslatex papers
%\documentclass[graybox]{svmult}
%\documentclass{amsart}
\documentclass[12pt, letterpaper]{article}
\usepackage{amsfonts, amsmath,amsthm}
\usepackage[utf8]{inputenc}
\setlength{\topmargin}{-.25in}
\setlength{\textheight}{9in}
\oddsidemargin 0pt
\setlength{\textwidth}{6.5in}
\usepackage{comment}
\pagestyle{plain}
\numberwithin{equation}{section}

\newtheorem{corollary}[equation]{Corollary}
\newtheorem{proposition}[equation]{Proposition}
\newtheorem{definition}[equation]{Definition}
\newtheorem{conjecture}[equation]{Conjecture}
\newtheorem{remark}[equation]{Remark}

\numberwithin{equation}{section}

\def\calO{\mathcal{O}}

\def\bbP{\mathbb{P}}

\def\bbX{\mathbb{X}}

\begin{document}

\title{Cremona Orbits in $\bbP^4$ and Applications}
\author{
Olivia Dumitrescu \thanks{The first author is supported by NSF grant DMS1802082.} \\
\small University of North Carolina at Chapel Hill \\ \small Chapel Hill, NC 27599-3250 \\
\small Simion Stoilow Institute of Mathematics\\ \small Romanian Academy\\ \small 010702 Bucharest, Romania \\
\and
Rick Miranda \\
\small Colorado State University \\ \small Fort Collins, CO 80523 USA}
\begin{comment}
 \\
{\small
University of North Carolina at Chapel Hill \\ 340 Phillips Hall \\ Chapel Hill, NC 27599-3250}
and \\
Rick Miranda \\
{\small
Dept. of Mathematics\\Colorado State University\\Ft. Collins, CO 80523 USA}
}

%\keywords{Weyl lines, (-1) curves, Weyl surfaces, (-1)-classes}
%\subjclass[2010]{Primary: 14C20 Secondary:  14C17, 14J70}
%\thanks{Research supported in part by the NSF under grant DMS-9104058}
%\date{\today}
\end{comment}
\maketitle

\begin{abstract}
This article is motivated by the authors interest in the geometry of the Mori dream space
$\bbP^4$ blown up in$8$ general points. 
In this article we develop the necessary technique
for determining Weyl orbits of linear cycles for the four-dimensional case,
by explicit computations in the Chow ring of the resolution of the standard Cremona transformation.
In particular, we close this paper with applications to the question of
the dimension of the space global sections of effective divisors having at most $8$ base points.
\end{abstract}
%\tableofcontents

\section{Introduction}

Let $X^n_s$ be projective space $\bbP^n$ blown up at $s$ general points.
Motivated by the study of the dimensionality problem for effective divisors on $X^n_{s}$,
we analyze the standard Cremona action on $X^{4}_{8}$ and give several applications.
We establish first the terminology we use throughout the paper.
We call a \emph{Weyl line/Cremona line (Weyl hyperplane respectively)} to be
the orbit under the Weyl group action
of a line passing through two of the $s$ points
(hyperplane passing through $n$ of the points).
In dimension two the \emph{Weyl lines} are also known in the literature as $(-1)$ curves;
via a theorem of Nagata \cite[Theorem 2a]{nagata1}
they can be described via numerical properties as irreducible classes with selfintersection $-1$ and anticanonical degree $1$.
In \cite{dpr} the authors noticed that Nagata's work can be generalized and similar numerical properties via Dolgachev-Mukai bilnear form are equivalent to
{\it Weyl divisors}. In dimension three the Weyl group action on curves was analyzed by Laface - Ugaglia in \cite{LU}.
Finally, in arbitrary dimension,  the Weyl group action on curves in  $X^n_{s}$
and their connection to  $(-1)$-curves introduced by Clemens, Friedman and Kontsevich
is analyzed by the two authors in a forthcomming paper \cite{DM}. 

In the planar case the Gimigliano-Harbourne-Hirschowitz conjecture, still open,
predicts that the dimension of the space of global sections of an effective divisor
depends on the Euler characteristic and the multiplicity of containment of Weyl lines in the base locus of the divisor.
In $\mathbb{P}^3$, the conjecture of Laface-Ugaglia \cite{LU}
predicts that this dimension depends on  the multiplicity of containtment of Weyl lines, Weyl hyperplanes,
and Weyl orbit of the unique quadric in $X^3_s$ passing through nine general points.

In general, for a small number of points, $X^n_{n+2}$,
it was proved this dimension depends on the Euler characteristic
and multiplicity of contaiment of \emph{linear cycles}
spanned by the fixed points in the base locus of the divisor $D$ as in \cite[Theorem 2.3]{bdp1}.
Moreover, the birational geometry of the space $X^n_{n+3}$,
studied in several publications (e.g. \cite{AC}, \cite{AM}, \cite{bdp3}),
namely the effective and movable cone of divisors,
their Mori chamber decompositions
together with the dimension of space of global sections
is determined by secant varieties to the the rational normal curve of degree $n$
passing through $n+3$ general points
together with their joins.
In general, the case $X^n_{n+4}$ seems to be mysterious.

We dedicate this paper to study $X^4_{8}$, which is a Mori Dream Space,
whose birational geometry is not totally explained in the literature. 
In this paper together with \cite{bdp4} we define and classify the varieties
that determine combinatorial data describing the geometry of $X^4_{8}$.

%Consider $X_{4,8}$ the blow up of $\PP^4$ at 8 points whose birational geometry is not yet established. The geometry of $X_{4,8}$ was studied by Mukai in \cite{Mu05} and Casagrande, in \cite{Cas2}, relating geometry of $X_{2,8}$ with $X_{4,8}$.

The two spaces $X_{2,8}$ and $X_{4,8}$, are related by Gale duality as in \cite{Mu05}.
The precise relation between $X_{2,8}$ and $X_{4,8}$ was established in the following theorem of Mukai
(semistability refers to semistability in the sense of Gieseker-Maruyama):
{\it $X_{4,8}$ is isomorphic to the moduli space of rank $2$ torsion free sheaves $F$ on $X_{2,8}$
for which $c_1(F)=-K_S$ and $c_2(F)=2$.}
Via Mukai's correspondence, Casagrande et al. describe in \cite{Cas2}
the five types surfaces in $X_{4,8}$ playing a special role in the Mori program.
In this paper we rediscover these surfaces as \emph{Weyl planes}, defined below analogously to Weyl lines and hyperplanes.
%A precise relation between $S$ and $X$ was established by 
%Mukai in \cite{Mu05} namely, if $q_1,\ldots, q_8\in \PP^2$ and ${q_1,\ldots, q_8}\in \PP^4$ are associated sets of points then
 %semistable in the sense of
	%Gieseker-Maruyama with respect to $-K_S+2h$ where $h\in Pic(S)$ is the pull-back of $\mathcal{O}_{\PP^2}(1)$ under the map
	%$S\stackrel{}{\rightarrow} \PP^2$.

The Weyl group of $X^{4}_{8}$ is generated by the standard Cremona transformations
together with permutations of the base points. 
In order to define and construct Weyl planes,
we introduce $Y^{4}_{8}$ to denote the blow up of $X^{4}_{8}$ along all lines joining any two points
and the eight rational normal curves of degree $4$ passing through $7$ points.
(These curves are all disjoint in $X^4_8$.)
\begin{definition}\label{wp}
A \emph{Weyl plane} is the Weyl orbit of the proper transform of a plane through three fixed points
under the blow up of the three lines joining any two points  in  $Y^{4}_{8}$ .
\end{definition}
It is important to remark that {\it Weyl planes} live on the space $Y^{4}_{8}$.
We emphasize that this orbit is different (in the Chow ring) than the Weyl orbit of {\it planes through three points}.
Moreover, in  \cite{bdp4} the authors introduce and classify the notions of {\it Weyl curves} and {\it Weyl surfaces} in $X^4_{8}$ as the 
intersection of two distinct {\it Weyl divisors}
that are orthogonal with respect to the Dolgachev-Mukai bilinear pairing. Since the classification of {\it Weyl surfaces} \cite{bdp4} in $X^4_{8}$ is the same with the classification of Proposition \ref{P4orbits} we can deduce that the two definitions of {\it Weyl planes} \eqref{wp} and {\it Weyl surfaces} \cite{bdp4} are equivalent in $X^4_{8}$.
By definition {\it Weyl lines} coincide with {\it Weyl curves} in the projective plane $X^{2}_{s}$, but the explicit relation between the two definitions in general will be studied in a differet paper.

In this paper Corollaries \ref{P4multiplicityonly} and \ref{surfacecremonaformulas}
enable us to determine the Weyl action on\\
(a)  $1$-cycles (i.e. curves) on the Chow ring of blow up of $X^{4}_{s}$ \\
(b) $2$-cycles (i.e. surfaces) on the Chow ring of $Y^{4}_{8}$.

As a consequence, Proposition \ref{P4orbits} determines the complete list of {\it Weyl planes} and {\it Weyl divisors} on $X^{4}_{8}$,
and it also  gives the formulas for all {\it Weyl lines} on $X^{4}_{s}$,
(for arbitrary number of fixed points $s$).
In particular, for $X^{4}_{8}$ \emph{the only Weyl lines are lines through two fixed points
and the rational normal curve of degree $4$ passing through $7$ of the $8$ points}.
In fact, in forthcoming paper \cite{DM} we prove this statement holds for all Mori Dream Spaces.
Let $Q_i$ denote {\it Weyl line} of degree $4$ (the rational normal quartic) skipping only the $i$th point.
In particular, we prove that on $X^{4}_{8}$ there are $5$ types of Weyl planes
(modulo permutation of points),
matching computations in \cite[Theorem 8.7]{Cas2} and \cite{bdp4}:

\begin{itemize}
\item The $56$ planes $S_1(ijk)$ through three of the $8$ points ($p_i$, $p_j$, $p_k$);
it has multiplicity one along the three lines $L_{ij}$, $L_{ik}$, $L_{jk}$
\item The $56$ cubic surfaces $S_3(i,j)$ triple at $p_i$, passing through all other points except $p_j$;
it has multiplicity one along the lines $L_{ik}$ for $k \neq i$ and $k \neq, j$, and along $Q_j$
\item The $56$ sextic surfaces $S_6(ijk)$ passing through $p_i$, $p_j$, and $p_k$ and triple at the other five points;
it has multiplicity one along all lines joining two of the five points, and along $Q_i$, $Q_j$, and $Q_k$
\item The $28$ surfaces $S_{10}(ij)$ of degree $10$ having two points $p_i$ and $p_j$ of multiplicity $6$ and triple at the other six points;
it has multiplicity $3$ along the line $L_{ij}$, multiplicity one along all lines $L_{ik}$ and $L_{jk}$ for $k\neq i,j$,
and multiplicity one along the curves $Q_k$ for $k \neq i,j$
\item The $8$ surfaces $S_{15}(i)$ of degree $15$ having one point $p_i$ with multiplicity $3$ and having multiplicity $6$ at the other seven points;
it has multiplicity one along all lines $L_{jk}$ for $j,k \neq i$, multiplicity one along each $Q_j$ for $j\neq i$, and multiplicity $3$ along $Q_i$.

\end{itemize}

In addition to the multiplicities at the points $p_i$,
the reader will note that for all of these surfaces we also compute the multiplicities along the lines $L_{ij}$
and along the rational normal quartics (through $7$ of the $8$ points).
This is important for computations in the Chow ring:
unless one takes into account that these surfaces have multiplicity along these curves,
one does not fully capture the intersection behavior of these surfaces after one blows up the points
(and in general the curves and surfaces that appear as base loci of linear systems of divisors).
It is also critical for computations of the dimensions of the linear systems:
it is one of the principles of this article that the multiplicities along these curves
must be taken into account in determining the difference between the virtual dimension
and the actual dimension of linear systems.
Indeed, for certain purposes it is useful to consider not only the blowup $X^4_8$
of $\bbP^4$ at the $8$ general points, but also then the further blowup $Y^4_8$
of all of the proper transforms of the lines $L_{ij}$ and the rational normal quartics $Q_k$;
these are easily seen to be disjoint in $X^4_8$ and therefore $Y^4_8$ is smooth.

\begin{comment}
\begin{equation}\label{eq1} 
	\begin{split}
H_{145}:        &    h-e_1-e_4-e_5-\sum_{i,j\in \{1,4,5\}} (e_{ij}-f_{ij})  \\
S^{3}_{1, \widehat{8}}: &   3h-3e_1-\sum_{i=2}^7e_i-(e_{C_{\widehat{8}}}-f_{C_{\widehat{8}}})-\sum_{i=2}^7(e_{1i}-f_{1i}) \\
S^{6}_{678}:  &    6h-3\sum_{i=1}^5e_i-\sum_{i=6}^8e_i
-\sum_{i,j\in\{1,2,3,4,5\},i\neq j}(e_{ij}-f_{ij})
-\sum_{k=6}^8(e_{C_{\widehat{k}}}-f_{C_{\widehat{k}}}) \\
S^{10}_{12}:  & 10h-6e_1-6e_2-\sum_{i=3}^8 3e_i
-3(e_{12}-f_{12})
-\sum_{i=1}^2\sum_{j=3}^8(e_{ij}-f_{ij})
-\sum_{k=3}^8(e_{C_{\widehat{k}}}-f_{C_{\widehat{k}}}) \\
S^{15}_{8}:  & 15h
-\sum_{i=1}^76e_i-3e_8
-\sum_{1\le i<j\le 7}(e_{ij}-f_{ij})
-\sum_{i=1}^7(e_{C_{\widehat{i}}}-f_{C_{\widehat{i}}})
-3(e_{C_{\widehat{8}}}-f_{C_{\widehat{8}}})
\end{split}
\end{equation}
\end{comment}

\begin{remark}
In paper \cite{bdp4} the authors use a different notation for the Chow ring basis. For example $\{h, e_i, e_{ij}\}$ and $\{h^1, e_i^1\}$ of \cite{bdp4} represent here $\{S, S_i, P_{ij}\}$ and $\{l, l_i\}$ respectively.  In \cite{bdp4} surfaces denoted above by $S_1(ijk)$,  $S_3(i,j)$,  $S_6(ijk)$, $S_{10}(ij)$, $S_{15}(i)$ are denoted by $H_{ijk}$, $S^{3}_{i, \widehat{j}}$, $S^{6}_{ijk}$, $S^{10}_{ij}$, $S^{15}_{i}$ respectively.
\end{remark}

We predict that the birational geometry of $X^{4}_{8}$ is determined not only by Weyl hyperplanes,
but also Weyl lines and Weyl planes classified in Proposition \ref{P4orbits}.
Finally, in Section \ref{applicationssection} we present applications to the vanishing conjecture and dimensionality problem.

\subsection*{Acknowledgements} The collaboration  was partially supported by NSF grant DMS1802082.

\begin{comment}
Via Mukai's correspondence, Casagrande and co. describe in \cite{Cas2} the five types surfaces in $X_{4,8}$ playing a special role in the Mori program: %in Theorem 8.7
(a) planes passing through three points; (b)	 cones over one point of the rational normal curve in $\mathbb{P}^3$ passing through seven points; (c) surfaces of degree 6 with three simple points and five triple points; (d) surfaces of degree 10; (e) surfaces of degree 15.
In this paper we will precisely describe these five types of surfaces together with their corresponding multiplicities. 
\end{comment}

\section{The standard Cremona transformation and its resolution}
The standard Cremona transformation of $\bbP^n$ can be elegantly factored into a series of blowups
at the proper transforms of the coordinate linear spaces, followed by a series of symmetric blowdowns.

Fix coordinates $[x_0:x_1:\cdots:x_n]$ in $\bbP^n$, and consider the standard Cremona involution
\[
[x_0:x_1:\cdots:x_n] \longrightarrow [x_0^{-1}:x_1^{-1}:\cdots:x_n^{-1}]
\]
which simply inverts all the coordinates.
This is well-defined on the torus where all coordinates are non-zero,
and has fundamental locus the union of the coordinate hyperplanes.
The transformation is relatively straightforward to resolve in a sequence of blowups and blowdowns, as follows.

Let $p_0,p_1,\ldots,p_n$ be the coordinate points of $\mathbb{P}^n$.
For an index set $I \subset \{0,1,\ldots,n\}$, denote by $L_I$ the linear span of the coordinate points indexed by $I$:
$L_I = \text{span} \{p_i\;|\; i \in I\}$.
We have that $\dim L_I = |I|-1$.

We set $\bbX^n_0 = \mathbb{P}^n$,
and define $\pi_j:\bbX^n_j \to \bbX^n_{j-1}$ to be the blowup of the proper transforms of all $L_I$ with $|I|=j$.
Hence $\pi_1$ is the blowup of all the coordinate points in $\mathbb{P}^n$;
$\pi_2$ is the blowup of the (proper transforms of the) coordinate lines $L_{ij}$; etc.
Note that the sequence of blowups stops with $\pi_{n-1}$, the blowup of the codimension two coordinate linear spaces,
creating the space $\bbX^n_{n-1}$.
We will denote by $E_I$ the exceptional divisor created when $L_I$ is blown up.
$E_I$ is created on $\bbX^n_{|I|}$,
and we will use the notation $E_I$ for the proper transform on subsequent blowups too.
If $|I|=n$, then $L_I$ is a coordinate hyperplance in $\bbP^n$;
we will denote its proper transform in $\bbX^n_{n-1}$ by $E_I$ as well.

We note at this point that, on $\bbX^n_{n-1}$, the nature and configuration of the divisors $E_I$
are completely symmetric, with respect to taking complements;
in other words, we have an isomorphism of $\bbX^n_{n-1}$ that switches the roles of $E_I$ and $E_J$
when $I$ and $J$ are complementary in $\{0,1,\ldots,n\}$.
Hence we can reverse the sequence of blowups with the complementary divisors,
and blow down to $\bbP^n$ ``the other way'':
first blow down the $E_I$ with $|I|=2$,
then the $E_I$ with $|I|=3$, etc.,
finishing by blowing down the proper transforms of the coordinate hyperplanes $E_{|I|}$
with $|I|=n$.
This is the resolution of the birational involution.

We note that:
\begin{itemize}
\item On $\bbX^n_{|I|-1}$ when the $L_I$ are blown up, they are all disjoint.
%Indeed, any two $L_I$ and $L_J$ meet only along a lower-dimensional $L_K$ (in fact $K = I \cap J$),
%which will be blown up at an earlier stage, separating the two spaces.
\item Each linear space $L_I$ experiences a sequence of blowups (by the earlier blowups);
on $\bbX^n_{|I|-1}$, the proper transform of each $L_I$ is isomorphic to $\bbX^{|I|-1}_{|I|-2}$.
\item By induction, this proper transform has both the hyperplane divisor class $H$
(the pullback of the hyperplane divisor class on $\bbX^{|I|-1}_0 = \bbP^{|I|-1}$)
and its Cremona involution image $H'$.
\item On $\bbX^{|I|-1}_{|I|-2}$, the normal bundle of the proper transform of $L_I$
is isomorphic to 
\[\calO(-H')^{\oplus n-|I|+1}.\]
\item Since the normal bundle of the proper transform of $L_I$ splits as a direct product of identical line bundles,
when $E_I$ is created on $\bbX^n_{|I|}$, it is isomorphic to a product $\bbX^{|I|-1}_{|I|-2} \times \bbP^{n-|I|}$.
\item $E_I$ experiences further blowups on its way to $\bbX^n_{n-1}$, and there it is isomorphic to
$\bbX^{|I|-1}_{|I|-2} \times \bbX^{n-|I|}_{n-|I|-1}$,
where it has normal bundle isomorphic to the tensor product
of the anti-Cremona-hyperplane bundles coming from the two factors.
\end{itemize}
%We will abuse notation and also denote by $E_I$ the proper transform of $E_I$ on the subsequent blowups;
%when necessary, we will denote by $E_{I,j}$ the proper transform of $E_I$ on $\bbX^n_j$ (for $j \geq |I|$).

This construction generalizes the familiar construction of the quadratic Cremona transformation of $\bbP^2$,
which is obtained by blowing up the three coordinate points $L_0$, $L_1$, and $L_2$ (obtaining $\bbX^2_1$)
and then blowing down the three coordinate lines $L_{01}$, $L_{02}$, and $L_{12}$.

\section{The case of three-space}
For three-space, the sequence of iterated blowups in this case involve two sets of blowups:
\[
\bbX^3_2 \stackrel{\pi_2}{\longrightarrow} \bbX^3_1 \stackrel{\pi_1}{\longrightarrow} \bbX^3_0 = \mathbb{P}^3
\]
where $\pi_1$ blows up the four coordinate points $p_i = L_i$ and $\pi_2$ blows up the six proper transforms of the coordinate lines $L_{ij}$.
The exceptional divisors $E_i$ start out as $\bbP^2$'s in $\bbX^3_1$,
and then are further blown up to become isomorphic to $\bbX^2_1$'s in $\bbX^3_2$.
The coordinate lines start in $\bbP^2$ having normal bundle of bidegree $(1,1)$;
after blowing up the two coordinate points on each, the proper transforms have normal bundles with bidegree $(-1,-1)$ in $\bbX^3_1$.
They are then blown up to $E_{ij} \cong \bbP^1\times\bbP^1$ in $\bbX^3_2$.
Finally the coordinate hyperplanes $L_{ijk}$ are each blown up three times by $\pi_1$,
and then not blown up further by $\pi_2$, and so arrive at $\bbX^3_2$ as surfaces isomorphic to $\bbX^2_1$.

The blowing down proceeds by blowing down the $E_{ij}$ via the other ruling,
which blows down each $L_{ijk}$ to a $\bbP^2$;
one then blows down each of these to points, finishing the process.

If one is interested in intersection phenomena related to these coordinate subspaces,
the Chow ring is the appropriate tool; it is useful primarily for recording two different kinds of phenomena.
One is \emph{containment} (with multiplicity) by a given subvariety of one of the blow-up centers.
In $\bbP^3$, for divisors, this is the multiplicity of the divisor at one of the coordinate points,
and the multiplicity of containment along one of the coordinate lines.
For curves, this is the multiplicity of the curve at one of the coordinate points.
For a divisor written in the form
$D = dH - \sum_i m_i E_i - \sum_{ij} n_{ij} E_{ij}$,
the coefficient $d$ is the degree; $m_i$ is the multiplicity at the coordinate point $L_i$;
and $m_{ij}$ is the multiplicity along the line $L_{ij}$.

The other phenomena which the Chow ring coefficients can record
is the higher-dimensional \emph{contact} that the given subvariety may have with one of the blow-up centers.
(Higher-dimensional contact in the sense of higher than expected dimension.)
In $\bbP^3$, for surfaces, this is not relevant for the coordinate points and lines;
higher-dimensional contact is containment with multiplicity.
This is also true for curves with respect to the points: the only phenomenon is that of containment.
However with curves, one can have additional contact with the lines, without containment.

The Chow ring of $\bbX^3_2$ is not difficult to compute;
all the relevant tools are presented in \cite{EH}, chapters 9 and 13.
The codimension zero classes are one-dimensional, generated by $[\bbX^3_2]$ itself;
the codimension three classes are also one-dimensional, generated by the class $[p]$ of a point.
The codimension one classes are freely generated by the pullback $H$ of the hyperplane class,
and the exceptional divisors $E_i$ and $E_{ij}$.

In codimension two, the group $A^2(\bbX^3_2)$ contain the following elements.
The pullback of the general line class in $\bbP^3$ will be denoted by $\ell$.
The general line class inside the exceptional divisor $E_i$ will be denoted by $\ell_i$.
The exceptional divisor $E_{ij}$ is isomorphic to $\bbP^1\times\bbP^1$,
and contributes a priori two curve classes:
the class $f_{ij}$ of the fiber of the blowup $\pi_2$,
and the class $g_{ij}$ which is the horizontal ruling of $E_{ij}$.
These are not independent though in $A^2(\bbX^3_2)$; it is an exercise to check that
\[
g_{ij} = f_{ij} + \ell - \ell_i - \ell_j
\]
and that this is the only relation in $A^2$.

For a curve class $C$ written as
$C = d\ell - \sum_im_i\ell_i - \sum_{ij} n_{ij}f_{ij}$,
the coefficient $d$ is the degree,
$m_i$ is the multiplicity of $C$ at the coordinate point $L_i$,
and $n_{ij}$ is the additional contact of $C$ with the coordinate line $L_{ij}$
(over and above the contact implied by the multiplicities at the two coordinate points on $L_{ij}$).

We have the following,
where we use typical $\delta$-notation:
$\delta_{I,J} = 1$ if $I \subseteq J$ and $0$ otherwise.

\begin{proposition}\label{ChowringX32}
\item[(a)] A basis for the Chow ring of $X^3_2$ is given by:
\[
\begin{matrix}
A^0: & [X^3_2] \\
A^1: & H, E_0, E_1, E_2, E_3, E_{01}, E_{02}, E_{03}, E_{12}, E_{13}, E_{23} \\
A^2: & \ell, \ell_0, \ell_1, \ell_2, \ell_3, f_{01},f_{02}, f_{03}, f_{12}, f_{13}, f_{23} \\
A^3: & p
\end{matrix}
\]
\item[(b)] Multiplication of these basis elements are given by:
\[
\begin{matrix}
A^1\cdot A^1 & H & E_i & E_{ij} \\
H & \ell & 0 & f_{ij} \\
E_k & 0 & -\ell_i\delta_{ik} & f_{ij}\delta_{k,ij} \\
E_{kl} & f_{kl} & f_{kl}\delta_{i,kl} & (-2f_{ij} -\ell+ \ell_i+\ell_j)\delta_{ij,kl}
\end{matrix}
\]
\[
\begin{matrix}
A^1\cdot A^2 & H & E_i & E_{ij} \\
\ell     & p & 0 & 0 \\
\ell_k  & 0 & -p \delta_{i,k} &  0 \\
f_{kl}  & 0 &  0 & -p \delta_{ij,kl} \\
\end{matrix}
\]
\end{proposition}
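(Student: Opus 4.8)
The plan is to compute $A^{*}(X^3_2)$ by applying the blow-up formula for Chow rings (\cite{EH}, Chapters~9 and~13) successively along $\bbP^3 \xleftarrow{\pi_1} X^3_1 \xleftarrow{\pi_2} X^3_2$, tracking the generators produced at each stage and then reading every product off the resulting ring structure. For part~(a): from $A^{*}(\bbP^3)=\mathbb{Z}[H]/(H^4)$, with additive generators $[\bbP^3]$, $H$, $\ell:=H^2$, $p:=H^3$, each of the four point blow-ups is the blow-up of a smooth codimension-$3$ center, hence contributes one new generator $E_i\in A^1$ and one new generator $\ell_i\in A^2$ (the line class of the exceptional $\bbP^2$), with no new $A^3$ generator since the point class of each exceptional divisor is identified with $p$; this gives the stated basis of $A^{*}(X^3_1)$. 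Each of the six proper transforms $\widetilde{L}_{ij}$ is smooth ($\cong\bbP^1$) and, as recorded in the text, they are pairwise disjoint in $X^3_1$, so $\pi_2$ is the blow-up of a smooth codimension-$2$ center and the formula adds, for each $ij$, a generator $E_{ij}\in A^1$ (from $A^0(\widetilde{L}_{ij})$) and a generator $f_{ij}\in A^2$ (from $A^1(\widetilde{L}_{ij})$, the fiber class of $E_{ij}\to\widetilde{L}_{ij}$). Counting, $A^1$ and $A^2$ each gain $1+4+6=11$ generators while $A^0$ and $A^3$ stay of rank one, which is exactly the claimed basis. The class $g_{ij}$ plays no role in~(a): since it will turn out that $g_{ij}=\ell-\ell_i-\ell_j+f_{ij}$, the families $\{f_{ij}\}$ and $\{g_{ij}\}$ span the same subgroup, and this is the only relation among the two-dimensional classes.

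For part~(b) I would use that $A^{*}(X^3_2)$ is generated as an algebra over $\pi_2^{*}A^{*}(X^3_1)$ by the $E_{ij}$, so that each product falls into one of three cases. First, the vanishing entries: $H\cdot E_i$, $E_m\cdot E_{ij}$ with $m\notin\{i,j\}$, $E_i\cdot E_j$ with $i\neq j$, $\ell\cdot E_i$, $\ell\cdot E_{ij}$, $\ell_k\cdot H$, $\ell_k\cdot E_{ij}$, $f_{kl}\cdot H$, $f_{kl}\cdot E_i$, and $E_{ij}\cdot E_{kl}$ with $\{i,j\}\neq\{k,l\}$ all vanish, either by the projection formula (the pullback of a class vanishes on an exceptional divisor lying over a center of smaller dimension, and a cycle supported on such a center has zero pushforward in codimensions below its own) or by disjointness of the centers and representatives involved. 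Second, the straightforward nonzero entries: $H\cdot\ell=p$ by definition; $E_i^{2}=-\ell_i$ and $E_i\cdot\ell_i=-p$ from the normal bundle $N_{E_i/X^3_1}=\calO_{\bbP^2}(-1)$, where one checks that the second round of blow-ups does not perturb $\calO(E_i)|_{E_i}$ because each $\widetilde{L}_{ij}$ meets $E_i$ transversally (so $\pi_2^{*}E_i$ coincides with the proper transform of $E_i$); and $H\cdot E_{ij}=f_{ij}$, $E_k\cdot E_{ij}=f_{ij}$ for $k\in\{i,j\}$, $\ell_k\cdot E_i=-p\,\delta_{i,k}$, $f_{kl}\cdot E_{ij}=-p\,\delta_{ij,kl}$, which follow from the identity $\alpha\cdot E_{ij}=j_{E_{ij}*}\bigl(g^{*}(\alpha|_{\widetilde{L}_{ij}})\bigr)$ for $\alpha$ pulled back from $X^3_1$ (with $g\colon E_{ij}\to\widetilde{L}_{ij}$ and $g^{*}[\mathrm{pt}]=f_{ij}$), together with $N_{E_{ij}/X^3_2}|_{\mathrm{fiber}}=\calO_{\bbP^1}(-1)$.

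Third, and this is the one entry that needs genuine care, the self-intersection $E_{ij}^{2}$. Here I would first compute $\pi_{2*}[g_{ij}]=[\widetilde{L}_{ij}]=\ell-\ell_i-\ell_j$ (the proper transform in $X^3_1$ of a line through $p_i$ and $p_j$), which pins $g_{ij}$ down modulo a multiple of $f_{ij}$. Since $N_{E_{ij}/X^3_2}=\calO_{\bbP(N_{\widetilde{L}_{ij}/X^3_1})}(-1)$ restricts to $\calO_{\bbP^1}(-1)$ on every fiber and — because $N_{\widetilde{L}_{ij}/X^3_1}=\calO(-1)^{\oplus 2}$ — restricts to the degree $-1$ bundle on the minimal-section ruling $g_{ij}$ (the section corresponding to $\calO(-1)\hookrightarrow\calO(-1)^{\oplus 2}$), we obtain both $g_{ij}\cdot E_{ij}=-1$, which fixes the remaining coefficient and gives $g_{ij}=\ell-\ell_i-\ell_j+f_{ij}$, and $c_1(N_{E_{ij}/X^3_2})=-f_{ij}-g_{ij}$ in $A^1(E_{ij})\cong\mathbb{Z}^{2}$; hence $E_{ij}^{2}=j_{E_{ij}*}\bigl(c_1(N_{E_{ij}/X^3_2})\bigr)=-f_{ij}-g_{ij}=-2f_{ij}-\ell+\ell_i+\ell_j$. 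The main obstacle is exactly this last step: one must keep straight which projective-bundle/normal-bundle convention is in force (the exceptional divisor of a blow-up is $\bbP(N)$ in the ``lines'' sense, and $N_{E/X}$ is the tautological sub-line bundle, restricting to $\calO(-1)$ on each fiber), since it is precisely this sign that produces the $-2f_{ij}$ term rather than a cancellation and that makes $g_{ij}=f_{ij}+\ell-\ell_i-\ell_j$ come out with the correct sign of $f_{ij}$. Everything else in the two multiplication tables is then routine bookkeeping with the blow-up formula.
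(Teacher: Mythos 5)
Your proposal is correct and takes essentially the approach the paper itself indicates (the paper offers no written proof beyond citing \cite{EH}, Chapters 9 and 13, and leaving the relation $g_{ij}=f_{ij}+\ell-\ell_i-\ell_j$ as an exercise): the iterated blow-up formula for Chow groups together with normal-bundle and projection-formula computations, including the key identifications $N_{\widetilde L_{ij}/\bbX^3_1}\cong\calO(-1)^{\oplus 2}$ and $E_{ij}\cong\bbP^1\times\bbP^1$ with $\calO(E_{ij})|_{E_{ij}}\cong\calO(-1,-1)$. One cosmetic remark: the pushforward $\pi_{2*}$ pins $g_{ij}$ down only modulo the span of all the $f_{kl}$, not just $f_{ij}$, but since $g_{ij}$ is supported on $E_{ij}$ and hence meets every $E_{kl}$ with $\{k,l\}\neq\{i,j\}$ trivially while $f_{kl}\cdot E_{kl}=-p$, the remaining coefficients vanish and your argument goes through unchanged.
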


The Cremona involution extends to an involution $\phi$ on the Chow ring; we denote the image of the involution using a superscript prime:
\begin{itemize}
\item $[\bbX^3_2] \leftrightarrow [\bbX^3_2]$
\item $H \leftrightarrow H' = 3H-2\sum_i E_i - \sum_{ij} E_{ij}$
\item $E_l \leftrightarrow E_l' = L_{ijk} = H - E_i-E_j-E_k - E_{ij}-E_{ik}-E_{jk}$ for $i,j,k \neq l$
\item $E_{ij} \leftrightarrow E_{ij}' = E_{kl}$ for $k,l \neq i,j$.
\item $\ell \leftrightarrow \ell' = 3\ell-\sum_i\ell_i$
\item $\ell_i \leftrightarrow \ell_i' = 2\ell - \sum_{j \neq i} \ell_j$
\item $f_{ij} \leftrightarrow f_{ij}' = g_{kl} = f_{kl}+\ell - \ell_k - \ell_l$ for $k,l \neq i,j$.
\item $p \leftrightarrow p$.
\end{itemize}

We leave it to the reader to check that this is a ring automorphism, and is an involution.

\begin{proposition}\label{X32Cremonaformulas}
\mbox{\;}
\begin{itemize}
\item[(a)] Let $D = dH - \sum_i m_i E_i - \sum_{ij} n_{ij} E_{ij}$ be a general class in $A^1(\bbX^3_2)$.
Then the Cremona image $D'$ of $D$ under the involution is
$D' = d'H - \sum_i m_i' E_i - \sum_{ij} n_{ij}' E_{ij}$
where
\begin{align*}
d' &= D' \cdot \ell = D \cdot \ell' = D \cdot 3\ell-\sum_i\ell_i = 3d -\sum_i m_i; \\
m_i' &= D' \cdot \ell_i = D \cdot \ell_i' = D \cdot 2\ell - \sum_{j \neq i} \ell_j = 2d - \sum_{j \neq i} m_j; \\
n_{ij}' &= D' \cdot f_{ij} = D \cdot f_{ij}' = D \cdot f_{kl}+\ell - \ell_k - \ell_l = d + n_{kl} -m_k-m_l \\
&\;\;\mbox{ for }\;\; k,l \neq i,j
\end{align*}

\item[(b)] Let $C = d\ell - \sum_im_i\ell_i - \sum_{ij} n_{ij}f_{ij}$ be a general class in $A^2(\bbX^3_2)$.
Then the Cremona image $C'$ of $C$ under the involution is
$C' = d'\ell - \sum_i m_i' \ell_i - \sum_{ij} n_{ij}' f_{ij}$
where
\begin{align*}
d' &= C' \cdot H = C \cdot H' = C \cdot (3H-2\sum_i E_i - \sum_{ij} E_{ij})
= 3d - 2\sum_i m_i - \sum_{ij} n_{ij}; \\
m_i' &= C' \cdot E_i = C \cdot E_i' = C \cdot (H - \sum_{j\neq i}E_j -\sum_{j,k\neq i}E_{jk})
= d - \sum_{j \neq i} m_j - \sum_{j,k \neq i} n_{jk}; \\
n_{ij}' &= C' \cdot E_{ij} = C \cdot E_{ij}' = C \cdot E_{kl} = n_{kl}
\;\;\mbox{ for }\;\; k,l \neq i,j \\
\end{align*}

\end{itemize}

\end{proposition}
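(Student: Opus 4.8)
The plan is to recover the coefficients of a Cremona image by pairing it against a dual basis in complementary codimension, and then to transport that pairing back to the original class using two structural features of the Cremona automorphism $\phi$ of $A^\bullet(\bbX^3_2)$ recorded just above the statement: $\phi$ is a ring homomorphism, and it is an involution fixing the point class $p$ (and $[\bbX^3_2]$). First I would observe that these two facts force $\phi$ to be \emph{self-adjoint} for the intersection pairing between complementary codimensions. Indeed, for $\alpha\in A^1$ and $\beta\in A^2$ the product $\phi(\alpha)\cdot\beta$ lies in $A^3=\mathbb{Z}\,p$, hence is fixed by $\phi$; since $\phi$ is multiplicative and $\phi\circ\phi=\mathrm{id}$,
\[
\phi(\alpha)\cdot\beta \;=\; \phi\big(\phi(\alpha)\cdot\beta\big)\;=\;\phi(\phi(\alpha))\cdot\phi(\beta)\;=\;\alpha\cdot\phi(\beta),
\]
that is, $D'\cdot\gamma = D\cdot\gamma'$ for all $D\in A^1$, $\gamma\in A^2$ (and symmetrically with the roles of $A^1$ and $A^2$ exchanged). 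This is exactly the first equality displayed in each line of the proposition.

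For part (a), the $A^1\cdot A^2$ multiplication table of Proposition \ref{ChowringX32} shows that pairing $D'=d'H-\sum_i m_i'E_i-\sum_{ij}n_{ij}'E_{ij}$ against $\ell$, $\ell_i$, and $f_{ij}$ returns precisely $d'$, $m_i'$, and $n_{ij}'$ (under the identification $A^3\cong\mathbb{Z}\,p$; the minus signs in $\ell_k\cdot E_i=-p\,\delta_{i,k}$ and $f_{kl}\cdot E_{ij}=-p\,\delta_{ij,kl}$ cancel those in front of $m_i'E_i$ and $n_{ij}'E_{ij}$). Self-adjointness rewrites these as $D\cdot\ell'$, $D\cdot\ell_i'$, $D\cdot f_{ij}'$; substituting the expansions $\ell'=3\ell-\sum_i\ell_i$, $\ell_i'=2\ell-\sum_{j\neq i}\ell_j$, $f_{ij}'=f_{kl}+\ell-\ell_k-\ell_l$ given before the statement, and evaluating once more with the same table, yields the claimed formulas. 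Part (b) is the mirror computation: the coefficients $d'$, $m_i'$, $n_{ij}'$ of $C'$ are extracted by pairing $C$ against $H$, $E_i$, $E_{ij}$, transported via self-adjointness to $C\cdot H'$, $C\cdot E_i'$, $C\cdot E_{ij}'$, and then computed by substituting $H'=3H-2\sum_iE_i-\sum_{ij}E_{ij}$, $E_i'=H-\sum_{j\neq i}E_j-\sum_{j,k\neq i}E_{jk}$, $E_{ij}'=E_{kl}$ and using the $A^1\cdot A^2$ table again.

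I do not anticipate a genuine obstacle: the argument is entirely bookkeeping once self-adjointness is in place, and the chain of equalities is already laid out in the statement. The only points requiring care are the sign conventions — several entries of the multiplication table carry a minus sign, so one must verify they conspire to place the $(-)$ in front of $m_i'$ and $n_{ij}'$ as written — and the complementation conventions in $E_i'$, $E_{ij}'$, $f_{ij}'$, which are stated for general $n$ but must be read with $n=3$ (so that, e.g., $\{k,l\}$ is the complement of $\{i,j\}$ in $\{0,1,2,3\}$). One could also remark that a class in $A^1$ (resp. $A^2$) is determined by its pairings against the given basis of $A^2$ (resp. $A^1$), but this non-degeneracy is visible directly from the multiplication table, whose Gram matrix in these bases is, up to signs, the identity, so it needs no separate proof.
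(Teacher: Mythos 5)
Your proposal is correct and follows essentially the same route as the paper, whose "proof" is the chain of equalities embedded in the statement itself: extract each coefficient of the image class by pairing against the dual basis in complementary codimension, transport via $D'\cdot\gamma = D\cdot\gamma'$, and evaluate with the multiplication table of Proposition \ref{ChowringX32}. Your explicit derivation of the self-adjointness identity from $\phi$ being a ring involution fixing $p$ is a nice touch that the paper leaves implicit (it simply uses that the lifted involution preserves intersection numbers), but it is the same argument in substance.
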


(In the computations above we abuse notation and give the multiplications as integers instead of integer multiples of the point class $p$.)

If one is in the position of not needing to consider the contact phenomena for curves, one can simplify the formulas as follows.

\begin{corollary}
The subspace of $A^2(\bbX^3_2)$ spanned by $\ell$ and the $\ell_i$ is invariant under the Cremona involution.
If $C = d\ell - \sum_im_i\ell_i$ is a general class in $A^2(\bbX^3_2)$ in this subspace,
then the Cremona image $C'$ of $C$ under the involution is
$C' = d'\ell - \sum_i m_i' \ell_i$
where
\begin{align*}
d' &= C' \cdot H = C \cdot H' = C \cdot (3H-2\sum_i E_i - \sum_{ij} E_{ij})
= 3d - 2\sum_i m_i; \\
m_i' &= C' \cdot E_i = C \cdot E_i' = C \cdot (H - \sum_{j\neq i}E_j -\sum_{j,k\neq i}E_{jk})
= d - \sum_{j \neq i} m_j; \\
\end{align*}
\end{corollary}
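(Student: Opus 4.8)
The plan is to derive this corollary as an immediate specialization of Proposition~\ref{X32Cremonaformulas}(b). The key observation is that the proposed formulas for $d'$ and $m_i'$ in the corollary are exactly those of Proposition~\ref{X32Cremonaformulas}(b) with all $n_{ij}=0$, so the only real content is that the subspace spanned by $\ell$ and the $\ell_i$ (i.e.\ the locus $n_{ij}=0$ for all pairs $ij$) is actually preserved by the involution. First I would recall from the explicit action of $\phi$ on $A^2(\bbX^3_2)$ that $\ell' = 3\ell - \sum_i \ell_i$ and $\ell_i' = 2\ell - \sum_{j\neq i}\ell_j$; both of these lie in the span of $\ell$ and the $\ell_i$ and involve no $f_{ij}$ terms. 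Hence the subspace $V = \langle \ell, \ell_0,\ell_1,\ell_2,\ell_3\rangle$ is $\phi$-invariant, which is the first sentence of the corollary.

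Next, to get the displayed formulas, I would take $C = d\ell - \sum_i m_i \ell_i$ (so $C \in V$) and compute $C' = \phi(C)$ by writing $C' = d'\ell - \sum_i m_i'\ell_i$ — legitimate since $V$ is invariant — and then extract $d'$ and $m_i'$ by pairing against the divisor classes $H$ and $E_i$ using the duality $C' \cdot H = C \cdot H'$ and $C' \cdot E_i = C \cdot E_i'$, exactly as in the proof of Proposition~\ref{X32Cremonaformulas}(b). Using $H' = 3H - 2\sum_j E_j - \sum_{jk} E_{jk}$ and the multiplication table of Proposition~\ref{ChowringX32}(b), the pairing $\ell \cdot H = p$, $\ell_i \cdot H = 0$, $\ell \cdot E_j = 0$, $\ell_i \cdot E_j = -p\,\delta_{ij}$ gives $d' = C \cdot H' = 3d - 2\sum_i m_i$ (the $E_{jk}$ terms contribute nothing against $C \in V$ since $\ell \cdot E_{jk} = \ell_i \cdot E_{jk} = 0$), and $m_i' = C \cdot E_i' = C \cdot (H - \sum_{j\neq i}E_j - \sum_{j,k\neq i}E_{jk}) = d - \sum_{j\neq i}m_j$, again with the $E_{jk}$ terms dropping out. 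This matches the claimed formulas.

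I do not expect a genuine obstacle here — the statement is essentially a bookkeeping consequence of the preceding results. The only point requiring a little care is making explicit \emph{why} the $n_{ij}$-coefficients of $C'$ vanish, i.e.\ confirming $C' \cdot E_{ij} = C \cdot E_{ij}' = C \cdot E_{kl} = 0$ when $C \in V$; this is immediate from $\ell \cdot E_{kl} = 0$ and $\ell_i \cdot E_{kl} = 0$ in the multiplication table, and it is the computation that actually certifies invariance of $V$ (equivalently, one simply reads it off from the formulas for $\ell'$ and $\ell_i'$). So the proof is two or three lines: cite the action of $\phi$ on $\ell$ and $\ell_i$ to get invariance, then specialize the formulas of Proposition~\ref{X32Cremonaformulas}(b) by setting all $n_{ij}=0$.
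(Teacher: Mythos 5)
Your proposal is correct and follows essentially the same route the paper intends: the corollary is exactly Proposition \ref{X32Cremonaformulas}(b) specialized to $n_{ij}=0$, with invariance of the span of $\ell$ and the $\ell_i$ read off either from $n_{ij}'=n_{kl}$ or from the explicit formulas $\phi(\ell)=3\ell-\sum_i\ell_i$, $\phi(\ell_i)=2\ell-\sum_{j\neq i}\ell_j$, and your pairing computations against $H'$ and $E_i'$ using the multiplication table match the paper's. No issues.
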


\section{The Chow Ring for the case of $\bbP^4$}
The sequence of iterated blowups in this case involve three sets of blowups:
\[
\bbX^4_3 \stackrel{\pi_3}{\longrightarrow} \bbX^4_2 \stackrel{\pi_2}{\longrightarrow} \bbX^4_1 \stackrel{\pi_1}{\longrightarrow} \bbX^4_0 = \mathbb{P}^4
\]
where $\pi_1$ blows up the five coordinate points $p_i = L_i$ to divisors $E_i$,
$\pi_2$ blows up the ten proper transforms of the coordinate lines $L_{ij}$ to $E_{ij}$,
and $\pi_3$ blows up the ten proper transforms of the coordinate planes $L_{ijk}$ to $E_{ijk}$.

%For any index subset $I \subseteq \{0,1,2,3,4\}$,
%we denote by $H_I$ the hyperplane class inside $L_I$
%(pulled back to whichever $\bbX^4_r$ we are considering).
%We abbreviate $H = H_{01234}$ as the hyperplane class of $\bbP^4$.

We denote by $H$ the general hyperplane class in $\bbP^4$ (and all its pullbacks);
let us denote by $S = H^2$ the class of the general $2$-plane, and $\ell = H^3$ the class of the general line; the point class will be $p$ as usual.

In this section we'll present the Chow ring $A^*(\bbX^4_3)$,
proceeding through the sequence of three blowups.
In the starting fourfold $\bbX^4_0 \cong \bbP^4$,
the relevant subvarieties are simply the linear space $L_I$ for $I \subset \{0,1,2,3,4\}$.

After blowng up the points via $\pi_1$, we have:
\begin{itemize}
\item The divisors $E_i \cong \bbP^3$.
\item The proper transforms of the lines $L_{ij} \cong \bbP^1$.
\item The proper transforms of the $2$-planes $L_{ijk} \cong \bbX^2_1$.
\item The proper transforms of the hyperplanes $L_{ijk\ell} \cong \bbX^3_1$.
\end{itemize}

We now blow up with $\pi_2$ the proper transforms of the ten lines $L_{ij}$,
to the exceptional divisors $E_{ij}$, to obtain $\bbX^4_2$;
there, we have the following descriptions of the relevant subvarieties:

\begin{itemize}
\item The divisors $E_i \cong \bbX^3_1$.
\item The exceptional divisors $E_{ij} \cong \bbP^1\times\bbP^2$.
\item The $2$-planes $L_{ijk} \cong \bbX^2_1$.
\item The hyperplane threefolds $L_{ijk\ell} \cong \bbX^3_2$.
\end{itemize}

Finally we blow up the proper transforms of the ten surfaces $L_{ijk}$,
to the exceptional divisors $E_{ijk}$, to obtain $\bbX^4_3$;
there, the relevant subvarieties are:

\begin{itemize}
\item The divisors $E_i \cong \bbX^3_2$.
\item The divisors $E_{ij} \cong \bbP^1\times \bbX^2_1$.
\item The exceptional divisors $E_{ijk} \cong \bbX^2_1 \times \bbP^1$.
\item The hyperplane threefolds $L_{ijk\ell} \cong \bbX^3_2$.
\end{itemize}

The codimension one classes in $A^1(\bbX^4_3)$ are freely generated by
the pullback $H$ of the hyperplane class in $\bbP^4$
and the exceptional divisors $E_i$, $E_{ij}$, and $E_{ijk}$;
there are no relations among these.

In the group $A^2(\bbX^4_3)$ of codimension two classes,
we have the class $S = H^2$ of the pullback of a general $2$-plane in $\bbP^4$.
The other classes that will generate $A^2$ are supported in the exceptional divisors.

In $E_i$, which starts in $\bbX^4_1$ as a $\bbP^3$,
we have the general $2$-plane; pulled back to $\bbX^4_3$ this gives a class $S_i$ for each $i$.

The divisor $E_{ij}$ starts in $\bbX^4_2$ as isomorphic to the product $\bbP^1\times\bbP^2$.
This contributes two surface classes:
the fiber $\{\text{point}\}\times\bbP^2$ of the blowup,
and the product $\bbP^1\times\{\text{general line in }\bbP^2\}$.
Denote by $F_{ij}$ the pullback to $\bbX^4_3$ of the former, the fiber class;
and by $G_{ij}$ the pullback to $\bbX^4_3$ of the latter.

Finally the divisor $E_{ijk}$ is isomorphic to $\bbX^2_1\times\bbP^1$,
and contributes five surface classes.
One is $M_{ijk} = \bbX^2_1\times\{\text{point}\}$,
a cross-section of the blowup map.
The others come from products of curve classes in $L_{ijk}\cong \bbX^2_1$ with the fiber $\bbP^1$.
The curve classes in $L_{ijk}$ are generated by the pullback (from $\bbP^2$)
of the general line class $\ell_{ijk}$
and the three exceptional curves $e_{ijk,i}$, $e_{ijk,j}$, and $e_{ijk,k}$
which are (in $\bbX^4_2$) the intersection of $L_{ijk}$
with the three divisors $E_i$, $E_j$, and $E_k$ respectively.
These four classes give classes
$H_{ijk} = \ell_{ijk}\times\bbP^1$ and
$V_{ijk,i}$, $V_{ijk,j}$, and $V_{ijk,k}$
where $V_{ijk,i}$ comes from the product of $e_{ijk,i}\times\bbP^1$
and the same for the other two.

It is useful to introduce two new classes, for notational convenience.
These are:
\begin{equation}\label{PGFLambda}
P_{ij} = G_{ij}-F_{ij} \;\;\;\text{ and }\;\;\; \Lambda_{ijk} = 2H_{ijk}-V_{ijk,i}-V_{ijk,j}-V_{ijk,k};
\end{equation}
we note that $\Lambda_{ijk}$ is the pullback of the Cremona image of the line class on the $2$-plane $L_{ijk}$.
This will allow us to replace $G_{ij}$ by $P_{ij}$ among the generators for $A^2$.

There is a single relation among these codimension two classes beyond the definitional ones of (\ref{PGFLambda}).
It is that
\begin{equation}
M_{ijk}=S-S_i-S_j-S_k-P_{ij}-P_{ik}-P_{jk}+\Lambda_{ijk}.
\end{equation}

Finally we have the classes of the curves, the codimension three classes in $A^3(\bbX^4_3)$.
We again have the pullback $\ell$ of the general line class in $\bbP^4$,
and the classes $\ell_i$ of the general lines in the $E_i$.

The curve classes supported on $E_{ij}$
(which when it is created on $\bbX^4_2$ is isomorphic to $\bbP^1\times\bbP^2$)
are generated by the class $\ell_{ij} = \{\text{point}\}\times\{\text{general line in}\bbP^2\}$
and $h_{ij} = \bbP^1\times\{\text{point}\}$.

The curve classes coming from $E_{ijk}$
are the `horizontal' ones living in $L_{ijk}$, crossed with a point;
these we can denote again by $\ell_{ijk}$ and $e_{ijk,i}$, $e_{ijk,j}$, and $e_{ijk,k}$ as before.
The final one is a general fiber of the blowup $f_{ijk}$.

There are relations among these curve classes also; these are:
\begin{align}
h_{ij} &= \ell_{ij} + \ell - \ell_i - \ell_j; \;\;\; \ell_{ijk} = 2f_{ijk} + \ell -\ell_{ij}-\ell_{ik}-\ell_{jk}; \\
e_{ijk,i} &= f_{ijk} + \ell_i - \ell_{ij}-\ell_{ik}; \;\;\;
e_{ijk,j} = f_{ijk} + \ell_j - \ell_{ij}-\ell_{jk}; \;\;\;
e_{ijk,k} = f_{ijk} + \ell_k - \ell_{ik}-\ell_{jk}.
\end{align}
(Hence we can dispense with these to generate $A^3(\bbX^4_3)$.)

It is the case that, for a surface class $T$, one measures multiplicity along the line $L_{ij}$ by the intersection with $F_{ij}$, and one measures higher-dimensional contact with $L_{ij}$ by the intersection with $G_{ij}$.
Hence if the coefficients of $T$ include the terms $-mP_{ij} - nF_{ij}$,
then $m$ is the multiplicity of $T$ along the line
and $n$ is the additional contact of $T$ with the line,
so that one can read off these geometric phenomena from the coefficients directly.
($P$ and $F$ are the dual basis to $F$ and $G$ in $A^2$.)

We can similarly observe that a general surface class $T$
should meet the $2$-plane $L_{ijk}$ in a finite number of points.
The coefficients of $H_{ijk}$ and $V_{ijk,i}$, $V_{ijk,j}$, and $V_{ijk,k}$
(which generate the Picard group of the blown-up $L_{ijk}$)
record the higher-dimensional contact of a surface with $L_{ijk}$,
namely contact in a curve class rather than in a finite number of points.
Hence if the coefficients of $T$ include the terms
$-\alpha H_{ijk} + \beta_{ijk,i}V_{ijk,i} + \beta_{ijk,j}V_{ijk,j} + \beta_{ijk,k}V_{ijk,k}$
then the higher dimensional contact of $T$ with $L_{ijk}$
(away from the coordinate lines)
is a curve in the class 
$\alpha \ell_{ijk} - \beta_{ijk,i}e_{ijk,i} - \beta_{ijk,j}e_{ijk,j} - \beta_{ijk,k}e_{ijk,k}$.

Having described the generators for the Chow ring $A^*(\bbX^4_3)$,
we can now present the ring structure.
The computations are relatively straightforward,
using for example the formulas for the Chow rings of blowups
presented in \cite{EH}, chapter 13.
(The computation is iterative, first computing $A^*(\bbX^4_1)$,
then using that to compute $A^*(\bbX^4_2)$,
and finally $A^*(\bbX^4_3)$,)

\begin{proposition}\label{X43}
The Chow ring of $\bbX^4_3$ can be described as follows.
\begin{itemize}
\item[(a)] A basis for the Chow ring $A(\bbX^4_3)$ is given by the classes:
\[
\begin{matrix}
A^0: & [\bbX^4_3] = 1  \\
A^1: & H, E_i, E_{ij}, E_{ijk} \\
A^2: & S, S_i, P_{ij}, F_{ij}, H_{ijk}, V_{ijk,i}\\
A^3: & \ell, \ell_i, \ell_{ij},  f_{ijk} \\
A^4: & p \\
\end{matrix}
\]

\item[(b)] Multiplication of basis elements is given in the following tables.
\[
\begin{matrix}
A^1\cdot A^1 & H                      & E_i                          & E_{ij}                                   & E_{ijk} \\
H                   &    S                   & 0                             & F_{ij}                                   &  H_{ijk} \\
E_m               & 0                      & -S_i\delta_{i,m}        & F_{ij}\delta_{m,ij}               &  V_{ijk,m}\delta_{m,ijk}\\
E_{mn}          & F_{mn}            & F_{mn}\delta_{i,mn}  & -(P_{ij}+2F_{ij})\delta_{ij,mn}  & (H_{ijk}-V_{ijk,m}-V_{ijk,n})\delta_{mn,ijk} \\
% E_{ij}E_i = -P-2F = -(G-F)-2F = -F-G
%       &  =G_{mn}-P_{mn}           &          = (G_{mn}-P_{mn}) \delta_{i,mn}   & = (P_{ij}-2G_{ij})\delta_{ij,mn} &\\
E_{mnr}         & H_{mnr}           & V_{mnr,i}\delta_{i,mnr} & (H_{mnr}-V_{mnr,i}-V_{mnr,j})\delta_{ij,mnr} & -(M_{ijk}+\Lambda_{ijk})\delta_{ijk,mnr} \\
\end{matrix}
\]
%where
%\begin{align*}
%P_{ij} &= G_{ij}-F_{ij} \\
%\Lambda &= 2 H_{ijk} - V_{ijk,i} - V_{ijk,j} - V_{ijk,k} \\
%M_{ijk} &= S -S_i-S_j-S_k - P_{ij} - P_{ik} - P_{jk} + \Lambda_{ijk} \\
%\end{align*}

\[
\begin{matrix}
A^1 \cdot A^2 & H            & E_i                             & E_{ij}                           & E_{ijk} \\
S                    & \ell          & 0                                & 0                                 &   f_{ijk}  \\
S_m               & 0             &  -\ell_i\delta_{i,m}       &  0                                 & f_{ijk}\delta_{m,ijk}  \\
P_{mn}        & \ell_{mn}   & \ell_{mn}\delta_{i,mn} & (-\ell_{ij} - \ell + \ell_i + \ell_j)\delta_{ij,mn} & -f_{ijk}\delta_{mn,ijk} \\
F_{mn}          & 0             & 0                                & -\ell_{ij}\delta_{ij,mn}    & f_{ijk}\delta_{mn,ijk}  \\
G_{mn}         & \ell_{mn}  & \ell_{mn}\delta_{i,mn} &  (-2\ell_{ij} - \ell + \ell_i + \ell_j)\delta_{ij,mn} & 0 \\
H_{mnr}         & f_{ijk}     &  0                               & f_{mnr}\delta_{ij,mnr}   & (-4f_{ijk}-\ell+\ell_{ij}+\ell_{ik}+\ell_{jk})\delta_{ijk,mnr}  \\
V_{mnr,m}      & 0            &  -f_{mnr}\delta_{i,m}   & f_{mnr}\delta_{m,ij}      & (-2f_{mnr}-\ell_m + \ell_{mn}+\ell_{mr})\delta_{ijk,mnr}\\
\end{matrix}
\]

\[
\begin{matrix}
A^1\cdot A^3 & H     & E_i                 & E_{ij}                  & E_{ijk} \\
\ell                 & p     & 0                    & 0                        &    0   \\
\ell_m             & 0     & -p\delta_{i,m} & 0                        &  0 \\
\ell_{mn}        & 0     & 0                    & -p \delta_{ij,mn}  & 0  \\
f_{mnr}         &   0    & 0                    &  0                       &     -p\delta_{ijk,mnr}    \\
\end{matrix}
\]

\[
\begin{matrix}
A^2\cdot A^2 & S & S_i & P_{ij} & F_{ij} & G_{ij} & H_{ijk} & V_{ijk,i} \\
S                   & p & 0  &  0  & 0 &  0 & 0  & 0 \\ 
S_m              & 0  &  -p\delta_{i,m} & 0  & 0 &  0  & 0  & 0 \\
P_{mn}         & 0 & 0 & p\delta_{ij,mn} &  -p\delta_{ij,mn} & 0 & 0 & 0 \\
F_{mn}         & 0  & 0 &  -p\delta_{ij,mn} & 0 &  -p\delta_{ij,mn} & 0 \\
G_{mn}        & 0 & 0 & 0 & -p\delta_{ij,mn} & -p\delta_{ij,mn} & 0 & 0 \\
H_{mnr}       & 0 & 0 & 0 & 0 & 0 & -p\delta_{ijk,mnr}  & 0 \\
V_{mnr,m}   &  0 & 0 & 0 & 0 & 0 &  0 & p\delta_{ijk,mnr}\delta_{i,m}\\
\end{matrix}
\]
\end{itemize}
\end{proposition}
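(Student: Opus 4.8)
The plan is to compute $A^*(\bbX^4_3)$ by climbing the tower
\[
\bbX^4_3 \xrightarrow{\ \pi_3\ } \bbX^4_2 \xrightarrow{\ \pi_2\ } \bbX^4_1 \xrightarrow{\ \pi_1\ } \bbP^4,
\]
applying at each stage the blow-up formula for Chow rings from \cite{EH}, Chapter~13. Recall its shape: if $\pi\colon\widetilde X\to X$ is the blow-up of a smooth variety along a smooth center $Z$ of codimension $d$, with exceptional divisor $j\colon E=\bbP(N_{Z/X})\hookrightarrow\widetilde X$ and bundle map $g\colon E\to Z$, then additively
\[
A^*(\widetilde X)\;=\;\pi^*A^*(X)\;\oplus\;\bigoplus_{k=0}^{d-2}j_*\bigl(g^*A^*(Z)\cdot\zeta^{\,k}\bigr),\qquad\zeta=c_1(\calO_E(1)),
\]
and the ring structure is forced by $\pi^*$ being a ring map, the projection formula $j_*(\alpha)\cdot\beta=j_*\bigl(\alpha\cdot j^*\beta\bigr)$, the self-intersection identity $j^*[E]=-\zeta$, and the relation presenting $A^*(E)$ over $A^*(Z)$ through the Chern classes of $N_{Z/X}$. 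The three centers and their normal bundles are exactly those recorded in Section~2: the five coordinate points $L_i\subset\bbP^4$ with trivial normal bundle, so $E_i\cong\bbP^3$; the ten proper transforms in $\bbX^4_1$ of the coordinate lines $L_{ij}\cong\bbP^1$ with normal bundle $\calO(-1)^{\oplus3}$, so $E_{ij}\cong\bbP^1\times\bbP^2$; and the ten proper transforms in $\bbX^4_2$ of the coordinate planes $L_{ijk}\cong\bbX^2_1$ with normal bundle $\calO(-\Lambda)^{\oplus2}$, $\Lambda$ the Cremona image of the line class on $L_{ijk}$, so $E_{ijk}\cong\bbX^2_1\times\bbP^1$. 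One also needs the Chow rings of the centers, all classical: $A^*(\bbP^4)=\mathbb Z[H]/(H^5)$, $A^*(\bbP^3)$, $A^*(\bbP^1)$, and that of the degree-$6$ del Pezzo surface $\bbX^2_1$ (free, of ranks $1,4,1$, with $\mathrm{Pic}$ generated by $\ell_{ijk},e_{ijk,i},e_{ijk,j},e_{ijk,k}$).

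I would then work through the three steps. The step to $\bbX^4_1$ is immediate: each center is a reduced point with trivial normal bundle, so the formula produces exactly the generators $E_i,S_i,\ell_i,p$ and the products ($H\cdot E_i=0$, $E_i\cdot E_j=0$ for $i\ne j$, $E_i^2=-S_i$, $E_i\cdot S_i=-\ell_i$, and so on) follow from $j^*[E_i]=-\zeta_i$ and the relation on $\bbP^3$. For $\bbX^4_2$, I apply the formula to the ten disjoint curves $L_{ij}\cong\bbP^1$; since $c\bigl(\calO(-1)^{\oplus3}\bigr)=1-3h_{ij}$ with $h_{ij}^2=0$ on $\bbP^1$, the presenting relation on $E_{ij}\cong\bbP^1\times\bbP^2$ degenerates, and one reads off the new codimension-two classes $F_{ij}$ (fiber) and $G_{ij}$ and the new curve class $\ell_{ij}$ (with $h_{ij}$ dependent, $h_{ij}=\ell_{ij}+\ell-\ell_i-\ell_j$). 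The one genuinely geometric input at this stage is the incidence of $L_{ij}$ with $E_i$ and $E_j$: the proper transform of $L_{ij}$ meets $E_i$ transversally in a single point, so the blow-up turns $E_i$ from $\bbP^3$ into $\bbX^3_1$ and forces the cross term $E_i\cdot E_{ij}=F_{ij}$, pinned down by restricting to $E_{ij}$ and identifying $j^*E_i$ with the fiber of $g$ over that point; the excess term $E_{ij}^2=-(P_{ij}+2F_{ij})$ likewise comes from $j^*[E_{ij}]=-\zeta$ and the identification of $\zeta$ under $\bbP(\calO(-1)^{\oplus3})\cong\bbP^1\times\bbP^2$. Finally, for $\bbX^4_3$ I apply the formula to the ten surfaces $L_{ijk}\cong\bbX^2_1$ with normal bundle $\calO(-\Lambda)^{\oplus2}$: $\mathrm{Pic}(\bbX^2_1)$ pushes forward to the new classes $H_{ijk},V_{ijk,i},V_{ijk,j},V_{ijk,k}$, the point class of $\bbX^2_1$ to the new curve $f_{ijk}$, and $j_*\zeta=M_{ijk}+\Lambda_{ijk}$ (whence $E_{ijk}^2=-(M_{ijk}+\Lambda_{ijk})$). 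The section class $M_{ijk}$ is the dependent one: writing $j^*H$, $j^*E_i$, $j^*E_{ij}$ on $E_{ijk}$ in terms of $\mathrm{Pic}(\bbX^2_1\times\bbP^1)$ and pushing forward (equivalently, using $j_*\zeta=\pi^*[L_{ijk}]-j_*g^*c_1(N)$ together with the class of the proper transform of $L_{ijk}$ in $A^2(\bbX^4_2)$) yields, after the substitutions $P_{ij}=G_{ij}-F_{ij}$ and $\Lambda_{ijk}=2H_{ijk}-V_{ijk,i}-V_{ijk,j}-V_{ijk,k}$, the stated relation $M_{ijk}=S-S_i-S_j-S_k-P_{ij}-P_{ik}-P_{jk}+\Lambda_{ijk}$. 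With the bases of part~(a) fixed, every entry of the four tables of part~(b) is then computed mechanically by the same rules, after noting which exceptional divisor(s) the two basis classes share.

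The conceptual content is modest; the real work is bookkeeping, and that is where the main obstacle lies. One must keep precise track, through the later two blow-ups, of the proper transforms of all the previously created exceptional divisors and coordinate subspaces — for instance $E_i$ passing through the stages $\bbP^3$, $\bbX^3_1$, $\bbX^3_2$ and $E_{ij}$ through $\bbP^1\times\bbP^2$, $\bbP^1\times\bbX^2_1$ — and correctly account for the many incidences among the centers: each coordinate point lies on four coordinate lines and six coordinate planes, each line lies in three planes, each plane contains three of the points and three of the lines. These incidences are exactly what generate the off-diagonal $\delta$-terms and the excess-intersection corrections such as $-(P_{ij}+2F_{ij})$ in $E_{ij}^2$ and $-(M_{ijk}+\Lambda_{ijk})$ in $E_{ijk}^2$, and getting every coefficient right is the delicate part. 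As running sanity checks I would verify that the resulting ring satisfies Poincaré duality ($A^4\cong\mathbb Z p$, with the pairings $A^1\times A^3\to A^4$ and $A^2\times A^2\to A^4$ in the tables unimodular) and that the multiplication so defined is associative and symmetric; together with the rank counts in each degree, these constraints essentially pin down the answer and catch any sign or indexing slip. The later assertion that the standard Cremona transformation induces a ring automorphism of $A^*(\bbX^4_3)$ provides one more check of the same kind.
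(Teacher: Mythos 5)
Your proposal follows essentially the same route as the paper, which likewise obtains Proposition \ref{X43} by iterating the blow-up formula for Chow rings from \cite{EH}, Chapter 13, through $\bbX^4_1$, $\bbX^4_2$, $\bbX^4_3$, using the normal-bundle and proper-transform data recorded in Section 2 (and the paper gives no more detail than that, leaving the bookkeeping to the reader just as you describe). Your identifications of the centers, their normal bundles ($\calO(-1)^{\oplus 3}$ for the lines, $\calO(-\Lambda)^{\oplus 2}$ for the planes), the resulting exceptional divisors, and the origin of the relations $h_{ij}=\ell_{ij}+\ell-\ell_i-\ell_j$ and $M_{ijk}=S-S_i-S_j-S_k-P_{ij}-P_{ik}-P_{jk}+\Lambda_{ijk}$ all agree with the paper, and your added checks (Poincar\'e duality of the pairing tables, associativity, compatibility with the Cremona automorphism) are sensible safeguards for the computation.
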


\section{The Cremona Involution on $\bbP^4$}
Consider now the Cremona involution
\begin{align*}
[x_0:x_1:x_2:x_3:x_4] &\longrightarrow [\frac{1}{x_0}:\frac{1}{x_1}:\frac{1}{x_2}:\frac{1}{x_3}:\frac{1}{x_4}] \\
&= [x_1x_2x_3x_4:x_0x_2x_3x_4:x_0x_1x_3x_4:x_0x_1x_2x_4:x_0x_1x_2x_3]
\end{align*}
which lifts to a biregular automorphism of $\bbX^4_3$.
The induced action $\phi$ on the Chow ring $A(\bbX^4_3)$ is given as follows.

\begin{proposition}\label{CremonaP4}
\begin{align*}
\phi(H) &= 4H -3\sum_i E_i - 2\sum_{ij} E_{ij} - \sum_{ijk} E_{ijk} \\
\phi(E_i) & = [L_{jkmn\neq i}] = H - \sum_{m\neq i} E_m - \sum_{mn \neq i} E_{mn} - \sum_{mnr\neq i} E_{mnr} \\
\phi(E_{ij}) &= E_{mnr \neq i,j} \\
\phi(E_{ijk}) &= E_{mn \neq i,j,k} \\
\phi(S) &= 6S - 3\sum_i S_i - \sum_{ij} P_{ij} \\
\phi(S_m) &= 3S - 2\sum_{i\neq m} S_i - \sum_{ij\neq m} P_{ij} \\
\phi_(F_{mn}) &= M_{ijk\neq mn}
= S -S_i-S_j-S_k + F_{ij} + F_{ik} + F_{jk} -G_{ij} -G_{ik} -G_{jk} + \Lambda_{ijk} \\
%&\mbox{ }\;\;\;\;\;\;\;\; + 2 H_{ijk} - V_{ijk,i} - V_{ijk,j} - V_{ijk,k} \\
&= S -S_i-S_j-S_k - P_{ij} - P_{ik} - P_{jk} + 2 H_{ijk} - V_{ijk,i} - V_{ijk,j} - V_{ijk,k} \\
\phi(G_{mn}) &= \Lambda_{ijk\neq mn} = 2H_{ijk} - V_{ijk,i}-V_{ijk,j}-V_{ijk,k} \\
\phi(P_{mn}) &= -S + S_i + S_j + S_k + P_{ij}+P_{ik}+P_{jk} (ijk \neq mn)\\
\phi(H_{mnr}) &= 2G_{ij} - (H_{ijm} - V_{ijm,i}-V_{ijm,j}) - (H_{ijn} - V_{ijn,i}-V_{ijn,j})- (H_{ijr} - V_{ijr,i}-V_{ijr,j}) \\
&= 2P_{ij}+ 2F_{ij} - (H_{ijm} - V_{ijm,i}-V_{ijm,j}) - (H_{ijn} - V_{ijn,i}-V_{ijn,j})- (H_{ijr} - V_{ijr,i}-V_{ijr,j}) \\
&\mbox{ for }\;\; i,j \neq m,n,r \\
\phi(V_{mnr,m}) &= G_{ij} - (H_{ijn}-V_{ijn,i}-V_{ijn,j}) - (H_{ijr} - V_{ijr,i}-V_{ijr,j}) \\
&= P_{ij}+F_{ij} - (H_{ijn}-V_{ijn,i}-V_{ijn,j}) - (H_{ijr} - V_{ijr,i}-V_{ijr,j}) \\
&\mbox{ for }\;\; i,j \neq m,n,r \\
\phi(\ell) &= 4\ell - \sum_i \ell_i \\
\phi(\ell_m) &= 3\ell - \sum_{i\neq m} \ell_i \\
\phi(\ell_{mn}) &= 2\ell - \ell_i - \ell_j - \ell_k - f_{ijk} \mbox{ for } i,j,k \neq m,n \\
\phi(f_{mnr}) &= h_{ij} = \ell - \ell_i - \ell_j + \ell_{ij} \mbox{ for } i,j \neq m,n,r\\
\end{align*}
\end{proposition}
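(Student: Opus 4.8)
The plan is to determine $\phi$ first on the divisor classes $A^1(\bbX^4_3)$ from the geometry of the resolution, and then to propagate these formulas to $A^2$ and $A^3$ formally, using that $\phi$ is a ring homomorphism and that (as one checks directly from the tables of Proposition \ref{X43}) $A^*(\bbX^4_3)$ is generated as a ring by $A^1$.

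For the divisor classes, the action on the exceptional divisors over the coordinate \emph{lines} and \emph{planes} is forced by the complementation symmetry of $\bbX^4_3$ recorded in Section~2: the automorphism realizing the Cremona involution interchanges $E_I$ and $E_{I^c}$, where $I^c$ denotes the complement of $I$ in $\{0,1,2,3,4\}$. Since complementation swaps index sets of sizes $2$ and $3$, this gives $\phi(E_{ij}) = E_{mnr}$ and $\phi(E_{ijk}) = E_{mn}$ for complementary sets, and it identifies $\phi(E_i)$ with the class of the proper transform of the coordinate hyperplane $L_{jkmn}$ (the complement of $\{i\}$). That proper transform has degree one and contains, with multiplicity one each, every coordinate point, line, and plane missing the index $i$, which gives the stated expression. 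Finally, $\phi(H)$ is the class of the proper transform of a general member of the linear system of quartics defining the map; counting the order of vanishing of the monomials $x_1x_2x_3x_4,\dots,x_0x_1x_2x_3$ along the coordinate subspaces shows that a general such quartic has multiplicity $3$ at each coordinate point, $2$ along each coordinate line, and $1$ along each coordinate plane, giving $\phi(H) = 4H - 3\sum_i E_i - 2\sum_{ij} E_{ij} - \sum_{ijk} E_{ijk}$.

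For the remaining classes I would write each generator of $A^2$ and $A^3$ as a polynomial in divisor classes --- for example $S = H^2$, $S_i = -E_i^2$, $F_{ij} = H\cdot E_{ij}$, $P_{ij} = -E_{ij}^2 - 2H\cdot E_{ij}$, $H_{ijk} = H\cdot E_{ijk}$, $V_{ijk,i} = E_i\cdot E_{ijk}$, and $\ell = H^3$, $\ell_i = E_i^3$, $\ell_{ij} = -H\cdot E_{ij}^2$, $f_{ijk} = H^2\cdot E_{ijk}$, all read off from the multiplication tables --- then apply $\phi$, substitute the divisor formulas from the previous step, expand using Proposition \ref{X43}, and reduce modulo the single relation $M_{ijk} = S - S_i - S_j - S_k - P_{ij} - P_{ik} - P_{jk} + \Lambda_{ijk}$ together with the definitional identities $P_{ij} = G_{ij} - F_{ij}$, $\Lambda_{ijk} = 2H_{ijk} - V_{ijk,i} - V_{ijk,j} - V_{ijk,k}$, and the curve relations. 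For instance $\phi(F_{mn}) = \phi(H)\cdot E_{ijk}$ collapses, after using the identity for $\Lambda_{ijk}$, exactly to $M_{ijk}$, and $\phi(H_{mnr}) = \phi(H)\cdot E_{ij}$ collapses to $2G_{ij} - \sum_{s\in\{m,n,r\}}(H_{ijs}-V_{ijs,i}-V_{ijs,j})$.

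The difficulty here is bookkeeping rather than ideas: expanding $\phi(H)^2$, $\phi(H)^3$, $\phi(H)\cdot E_{mnr}$, $\phi(E_{mn})^2$, $\phi(E_i)^3$, and so on, produces many terms, and the reductions occur in the delicate part of the Chow ring supported on the divisors $E_{ijk}\cong\bbX^2_1\times\bbP^1$, where one must correctly carry the classes $H_{ijk}$ and $V_{ijk,i}$ coming from the blown-up $\bbP^2$ factor. I would control this by checking at the end that the resulting map squares to the identity on each generator and preserves every intersection number in Proposition \ref{X43}; besides validating the computation, this redundancy pins down the coefficients uniquely wherever the direct expansion is delicate.
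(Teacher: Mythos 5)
Your strategy is sound, and it is essentially a systematized version of what the paper leaves implicit: the paper states Proposition \ref{CremonaP4} without proof, relying on the Section~2 description of the resolution (the complementation symmetry $E_I \leftrightarrow E_{I^c}$, with $E_{I^c}$ the proper transform of the opposite coordinate hyperplane when $|I|=1$) and, implicitly, on identifying the images of the curve and surface cycles geometrically, one class at a time (e.g.\ the image of a general line is the quartic through the five points, the product structure of $E_I \cong \bbX^{|I|-1}_{|I|-2}\times\bbX^{n-|I|}_{n-|I|-1}$ governs the $F,G,H,V$ classes). You instead take only the divisor-level geometry as input --- which you justify correctly, both the complementation statement and the multiplicity count $(3,2,1)$ for the quartic homaloidal system giving $\phi(H)$ --- and then propagate to $A^2$ and $A^3$ using that $\phi$ is a ring automorphism and that every basis class is a product of divisor classes ($S_i=-E_i^2$, $F_{ij}=H\cdot E_{ij}$, $P_{ij}=-E_{ij}^2-2H\cdot E_{ij}$, $H_{ijk}=H\cdot E_{ijk}$, $V_{ijk,i}=E_i\cdot E_{ijk}$, $\ell_i=E_i^3$, $\ell_{ij}=-H\cdot E_{ij}^2$, $f_{ijk}=H^2\cdot E_{ijk}$, all of which check out against Proposition \ref{X43}). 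This trades geometric case analysis for Chow-ring bookkeeping; your sample computations ($\phi(H)\cdot E_{ijk}=M_{ijk}$ and $\phi(H)\cdot E_{ij}=2G_{ij}-\sum_s(H_{ijs}-V_{ijs,i}-V_{ijs,j})$) are correct, and your closing consistency checks ($\phi^2=\mathrm{id}$, preservation of the pairing) are exactly the right safeguard for the delicate terms supported on the $E_{ijk}$.

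One concrete point you should be prepared for: carried out honestly, your method gives $\phi(\ell_{mn}) = 2\ell-\ell_i-\ell_j-\ell_k+f_{ijk}$, with a \emph{plus} sign on $f_{ijk}$, whereas the statement as printed has $-f_{ijk}$. The plus sign is forced by the very checks you propose: with $\phi(f_{mnr})=h_{ij}=\ell_{ij}+\ell-\ell_i-\ell_j$ one gets $\phi^2(f_{mnr})=-f_{mnr}$ under the printed sign, and preservation of the pairing fails as well, e.g.\ $\phi(E_{mn})\cdot\phi(\ell_{mn})$ must equal $E_{mn}\cdot\ell_{mn}=-p$, which by the tables of Proposition \ref{X43} requires the coefficient $+1$ on $f_{ijk}$; the geometric cross-check (the class $4H-3\sum E_i-2\sum E_{ij}-\sum E_{ijk}$ must map back to $H$, in particular with vanishing coefficient along each $E_{ij}$) confirms the same. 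So this is a sign slip in the printed formula rather than a defect of your argument; just be explicit that your derivation corrects it rather than silently reproducing the stated line.
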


\begin{proposition}
\label{X43Cremonaformulas}
\mbox{\;}
\begin{itemize}
\item[(a)] Let $D = dH - \sum_i m_i E_i - \sum_{ij} m_{ij} E_{ij} - \sum_{ijk} m_{ijk} E_{ijk}$
be a general class in $A^1(\bbX^4_3)$.
Then the Cremona image $\phi(D)$ of $D$ under the involution is
\[
\phi(D) = d'H - \sum_i m_i' E_i - \sum_{ij} m_{ij}' E_{ij} - \sum_{ijk} m_{ijk}' E_{ijk}
\]
where
\begin{align*}
d' &= \phi(D) \cdot \ell = D \cdot \phi(\ell) = D \cdot (4\ell-\sum_r \ell_r) = 4d -\sum_r m_r; \\
m_i' &= \phi(D) \cdot \ell_i = D \cdot \phi(\ell_i)
= D \cdot (3\ell - \sum_{r\neq i} \ell_r) = 3d -\sum_{r\neq i}m_r \\
m_{ij}' &= \phi(D) \cdot \ell_{ij} = D \cdot \phi(\ell_{ij}) = D \cdot (2\ell - \sum_{r \neq ij}\ell_r - f_{rst \neq ij})
= 2d - \sum_{r \neq ij} m_r - m_{rst\neq ij} \\
m_{ijk}' &= \phi(D) \cdot f_{ijk} = D \cdot \phi(f_{ijk}) = D \cdot (\ell - \sum_{r \neq ijk} \ell_r + \ell_{rs\neq ijk})
= d - \sum_{r \neq ijk} m_r + m_{rs\neq ijk}
\end{align*}

\item[(b)]
Let $T = dS - \sum_i m_i S_i
- \sum_{ij} m_{ij}P_{ij} -\sum_{ij} n_{ij}F_{ij}
- \sum_{ijk} m_{ijk} H_{ijk}
+ \sum_{ijk} (n_{ijk,i}V_{ijk,i} + n_{ijk,j}V_{ijk,j} + n_{ijk,k}V_{ijk,k})$
be a general class in $A^2(\bbX^4_3)$.
Then the Cremona image $\phi(T)$ of $T$ under the involution is
\begin{align*}
\phi(T) &= d'S - \sum_i m_i' S_i
- \sum_{ij} m_{ij}'P_{ij} -\sum_{ij} n_{ij}'F_{ij} \\
&- \sum_{ijk} m_{ijk}' H_{ijk}
+ \sum_{ijk} (n_{ijk,i}'V_{ijk,i} + n_{ijk,j}'V_{ijk,j} + n_{ijk,k}'V_{ijk,k}) \\
\end{align*}
where
\begin{align*}
d' &= \phi(T)\cdot S = T \cdot \phi(S) = T\cdot(6S - 3\sum_i S_i - \sum_{ij} P_{ij}) \\
&= 6d - 3\sum_i m_i + \sum_{ij} (m_{ij}-n_{ij}) \\
m_i' &= \phi(T) \cdot S_i = T \cdot \phi(S_i)
= T\cdot(3S - 2\sum_{r\neq i} S_r - \sum_{rs\neq i} P_{rs}) \\
&= 3d - 2\sum_{r\neq i} m_r + \sum_{rs \neq i} (m_{rs}-n_{rs}) \\
m_{ij}' &= \phi(T) \cdot F_{ij} = T \cdot \phi(F_{ij}) \\
%=  T\cdot(\Lambda_{rst\neq ij} - M_{rst\neq ij}) \\
&=T\cdot(S -S_r-S_s-S_t - P_{rs} - P_{rt} - P_{st} + 2 H_{rst} - V_{rst,r} - V_{rst,s} - V_{rst,t}) \\
&= d - m_r-m_s-m_t +m_{rs}+m_{rt}+m_{st} -n_{rs} -n_{rt}-n_{st}
+ 2m_{rst} - n_{rst,r}-n_{rst,s}-n_{rst,t} \\
%= T \cdot(-S+S_r+S_s+S_t + P_{rs} + P_{rt} + P_{st})  \\
%&= -d + m_r + m_s + m_t + m_{rs} + m_{rt} + m_{st} \\
n_{ij}' &= \phi(T) \cdot G_{ij} = T \cdot \phi(G_{ij}) = T\cdot(\Lambda_{rst\neq ij}) 
%&= d - m_r-m_s-m_t - m_{rs} - m_{rt} - m_{st} -2m_{rst} + n_{rst,r} + n_{rst,s} + n_{rst,t} \\
= T\cdot(2H_{rst}-V_{rst,r} - V_{rst,s}-V_{rst,t}) \\
&= 2m_{rst}  -n_{rst,r}-n_{rst,s}-n_{rst,t} \\
m_{ijk}' &= \phi(T) \cdot H_{ijk} = T \cdot \phi(H_{ijk}) \\
&= T\cdot(2G_{rs} - (H_{rsi} - V_{rsi,r}-V_{rsi,s}) - (H_{rsj} - V_{rsj,r}-V_{rsj,s})- (H_{rsk} - V_{rsk,r}-V_{rsk,s}) \\
&\mbox{ for }\;\; rs \neq ijk  \\
&= 2n_{rs} - (m_{rsi} - n_{rsi,r}-n_{rsi,s}) - (m_{rsj} - n_{rsj,r}-n_{rsj,s})- (m_{rsk} - n_{rsk,r}-n_{rsk,s} \\
n_{ijk,i}' &= \phi(T) \cdot V_{ijk,i} = T \cdot \phi(V_{ijk,i}) \\
&= T\cdot(G_{rs} - (H_{rsj}-V_{rsj,r}-V_{rsj,s}) - (H_{rsk} - V_{rsk,r}-V_{rsk,s}) ) \\
&= n_{rs} - (m_{rsj}-n_{rsj,r}-n_{rsj,s}) - (m_{rsk} - n_{rsk,r}-n_{rsk,s}) \\
\end{align*}

\item[(c)] Let $C = d\ell - \sum_i m_i\ell_i - \sum_{ij} m_{ij}\ell_{ij} - \sum_{ijk}m_{ijk} f_{ijk}$ be a general class in $A^3(\bbX^4_3)$.
Then the Cremona image $\phi(C)$ of $C$ under the involution is
\[
\phi(C) = d' \ell - \sum_i m_i' \ell_i - \sum_{ij} m_{ij}' \ell_{ij} - \sum_{ijk} m_{ijk}' f_{ijk}
\]
where
\begin{align*}
d' &= \phi(C) \cdot H = C \cdot \phi(H) = C \cdot (4H -3\sum_i E_i - 2\sum_{ij} E_{ij} - \sum_{ijk} E_{ijk}) \\
&= 4d - 3\sum_i m_i - 2\sum_{ij} m_{ij} - \sum_{ijk} m_{ijk}; \\
m_i' &= \phi(C) \cdot E_i = C \cdot \phi(E_i)
= C \cdot (H - \sum_{r\neq i} E_r - \sum_{rs \neq i} E_{rs} - \sum_{rst\neq i} E_{rst}) \\
&= d - \sum_{r\neq i} m_r - \sum_{rs \neq i} m_{rs} - \sum_{rst\neq i} m_{rst}; \\
m_{ij}' &= \phi(C) \cdot E_{ij} = C \cdot\phi(E_{ij}) = C \cdot E_{rst\neq ij} = m_{rst\neq ij} \\
m_{ijk}' &= \phi(C) \cdot E_{ijk} = C \cdot\phi(E_{ijk}) = C \cdot E_{rs\neq ijk} = m_{rs\neq ij} \\
\end{align*}

\end{itemize}

\end{proposition}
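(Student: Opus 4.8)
The plan is to exploit two structural facts already available: $\phi$ is induced by a biregular automorphism of $\bbX^4_3$, hence is a graded ring homomorphism of $A^*(\bbX^4_3)$ that fixes the point class $p$; and, being the lift of the birational involution $[x_i]\mapsto[x_i^{-1}]$, it squares to the identity (its square is a biregular automorphism restricting to the identity on the open torus, hence is the identity). Consequently $\phi(x)\cdot\phi(y)=x\cdot y$ as elements of $A^4(\bbX^4_3)\cong\mathbb{Z}p$, and since $\phi^2=\mathrm{id}$ this gives $\phi(x)\cdot y=\phi(x)\cdot\phi(\phi(y))=x\cdot\phi(y)$ for all $x,y$. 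Hence, for a class $X\in A^k(\bbX^4_3)$, the coefficient of a basis vector $b$ in $\phi(X)$ is obtained by pairing $\phi(X)$ with the dual basis vector $b^\vee\in A^{4-k}$ of the perfect pairing $A^k\times A^{4-k}\to A^4$ and rewriting $\phi(X)\cdot b^\vee=X\cdot\phi(b^\vee)$; the right-hand side is then evaluated using the explicit formulas for $\phi$ on generators in Proposition~\ref{CremonaP4} and the multiplication tables in Proposition~\ref{X43}. The chains of equalities displayed in the statement are precisely these manipulations, so the proof reduces to (i) identifying the relevant dual bases and (ii) carrying out the resulting intersection products.

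First I would record the dual bases. For the perfect pairing $A^1\times A^3\to A^4$, the tables of Proposition~\ref{X43}(b) show that the basis $\{H,E_i,E_{ij},E_{ijk}\}$ of $A^1$ has dual basis $\{\ell,-\ell_i,-\ell_{ij},-f_{ijk}\}$ in $A^3$, and symmetrically for $A^3\times A^1$; with the sign conventions of the statement tracked, this immediately produces parts (a) and (c). For $A^2\times A^2\to A^4$ the Gram matrix in the basis $\{S,S_i,P_{ij},F_{ij},H_{ijk},V_{ijk,i}\}$ is block diagonal: the $S$-block is $+p$, the $S_i$-block is $-p$, the block $\langle H_{ijk},V_{ijk,i},V_{ijk,j},V_{ijk,k}\rangle$ is $\mathrm{diag}(-1,1,1,1)\,p$, and the only non-diagonal block is $\langle P_{ij},F_{ij}\rangle$, with Gram matrix $p\left(\begin{smallmatrix}1&-1\\-1&0\end{smallmatrix}\right)$. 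Inverting this last block shows that, in the sign convention $T=\dots-m_{ij}P_{ij}-n_{ij}F_{ij}-\dots$, the coefficients $m_{ij}$ and $n_{ij}$ are recovered by pairing with $F_{ij}$ and with $G_{ij}=P_{ij}+F_{ij}$, respectively; this is exactly why part (b) pairs $\phi(T)$ against $F_{ij}$ and $G_{ij}$ rather than against $P_{ij}$ and $F_{ij}$, and the sign pattern $-H_{ijk}$, $+V_{ijk,\bullet}$ in the expansion of $\phi(T)$ reflects the diagonal block $\mathrm{diag}(-1,1,1,1)$.

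With the dual bases fixed, I would substitute in each case the explicit formula for $\phi(b^\vee)$ from Proposition~\ref{CremonaP4} --- e.g.\ $\phi(\ell)=4\ell-\sum_r\ell_r$ for $d'$ in (a); $\phi(F_{ij})$ and $\phi(G_{ij})$ for $m_{ij}'$ and $n_{ij}'$ in (b); $\phi(H)=4H-3\sum_iE_i-2\sum_{ij}E_{ij}-\sum_{ijk}E_{ijk}$ for $d'$ in (c); and so on --- and expand the product $X\cdot\phi(b^\vee)$ using the multiplication tables of Proposition~\ref{X43}, collecting the coefficient of $p$. Each such product is a finite sum of table entries, and its $p$-coefficient is exactly the displayed linear combination of $d,m_i,m_{ij},m_{ijk}$ in (a) and (c), and of $d,m_i,m_{ij},n_{ij},m_{ijk},n_{ijk,\bullet}$ in (b).

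The main obstacle is purely computational and lies in part (b): the images $\phi(F_{ij})$, $\phi(H_{ijk})$, $\phi(V_{ijk,i})$ are long combinations of $S$, $S_r$, $P_{rs}$, $F_{rs}$, $H_{rst}$, and $V_{rst,\bullet}$, so each product $T\cdot\phi(b^\vee)$ generates many cross-terms, and one must keep careful track of the index ranges (the complementary index sets $rs$ of $ijk$, and $rst$ of $ij$) and of the off-diagonal $P$--$F$ pairing. Two checks keep the bookkeeping honest: every output formula must be independent of the representative chosen for a class, i.e.\ compatible with the definitional relations \eqref{PGFLambda} and with $M_{ijk}=S-S_i-S_j-S_k-P_{ij}-P_{ik}-P_{jk}+\Lambda_{ijk}$; and, as a global sanity check, $\phi^2=\mathrm{id}$ must hold on each graded piece, which simultaneously re-confirms that $\phi$ is the involution recorded in Proposition~\ref{CremonaP4}. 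Beyond these verifications, nothing conceptual remains.
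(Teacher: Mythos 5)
Your proposal is correct and follows essentially the same route as the paper: the chains of equalities in the statement are precisely the adjunction $\phi(X)\cdot b = X\cdot\phi(b)$ (valid since $\phi$ is a ring involution fixing the point class), with coefficients extracted via the intersection pairing of Proposition \ref{X43} and the generator formulas of Proposition \ref{CremonaP4}. Your dual-basis bookkeeping --- in particular pairing with $F_{ij}$ to recover $m_{ij}$ and with $G_{ij}=P_{ij}+F_{ij}$ to recover $n_{ij}$, and the sign pattern coming from the blocks of the Gram matrix on $A^2$ --- matches the conventions the paper uses, so nothing further is needed.
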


We note that, for surface classes in $A^2(\bbX^4_3)$,
higher-dimensional contact is observed by having nonzero coefficients
in the $F$, $H$, and $V$ basis elements.
For curve classes in $A^3$, this higher-dimensional contact
corresponds to nonzero coefficients in the $\ell_{ij}$ and the $f_{ijk}$ basis elements
(corresponding to a curve meeting a coordinate line or a coordinate plane).
The formulas above show that a similar phenomenon happens as in the $\bbP^3$ case:
if these are all zero, that is preserved under the involution.

\begin{corollary}\label{P4multiplicityonly}
\begin{itemize}
\item[(a)]
The subspace of $A^2(\bbX^4_3)$ spanned by $S$, the $S_i$, and the $P_{ij}$
is invariant under the Cremona involution $\phi$.
If $T = dS - \sum_i m_i S_i - \sum_{ij} m_{ij}P_{ij}$ is an element in this subspace, then
$\phi(T) = d'S - \sum_i m_i' S_i - \sum_{ij} m_{ij}'P_{ij}$
where
\begin{align*}
d' &= 6d - 3\sum_i m_i + \sum_{ij} m_{ij} \\
m_i' &= 3d - 2\sum_{r\neq i} m_r + \sum_{rs \neq i} m_{rs} \\
m_{ij}' &= d - m_r-m_s-m_t +m_{rs}+m_{rt}+m_{st} \;\;\;\mbox{ for }\;\;\; r,s,t \neq i,j \\
%= T \cdot(-S+S_r+S_s+S_t + P_{rs} + P_{rt} + P_{st})  \\
%&= -d + m_r + m_s + m_t + m_{rs} + m_{rt} + m_{st} \\
%n_{ij}' &= \phi(T) \cdot G_{ij} = T \cdot \phi(G_{ij}) = T\cdot(\Lambda_{rst\neq ij}) 
%&= d - m_r-m_s-m_t - m_{rs} - m_{rt} - m_{st} -2m_{rst} + n_{rst,r} + n_{rst,s} + n_{rst,t} \\
%= T\cdot(2H_{rst}-V_{rst,r} - V_{rst,s}-V_{rst,t}) \\
%&= 2m_{rst}  -n_{rst,r}-n_{rst,s}-n_{rst,t} \\
%m_{ijk}' &= \phi(T) \cdot H_{ijk} = T \cdot \phi(H_{ijk}) \\
%&= T\cdot(2G_{rs} - (H_{rsi} - V_{rsi,r}-V_{rsi,s}) - (H_{rsj} - V_{rsj,r}-V_{rsj,s})- (H_{rsk} - V_{rsk,r}-V_{rsk,s}) \\
%&%\mbox{ for }\;\; rs \neq ijk  \\
%&= 2m_{rs} + 2n_{rs} - (m_{rsi} - n_{rsi,r}-n_{rsi,s}) + (m_{rsj} - n_{rsj,r}-n_{rsj,s})- (m_{rsk} - n_{rsk,r}-n_{rsk,s} \\
%n_{ijk,i}' &= \phi(T) \cdot V_{ijk,i} = T \cdot \phi(V_{ijk,i}) \\
%&= T\cdot(G_{rs} - (H_{rsj}-V_{rsj,r}-V_{rsj,s}) - (H_{rsk} - V_{rsk,r}-V_{rsk,s}) ) \\
%&= m_{rs} + n_{rs} - (m_{rsj}-n_{rsj,r}-n_{rsj,s}) - (m_{rsk} - n_{rsk,r}-n_{rsk,s}) \\
\end{align*}
\item[(b)]
The subspace of $A^3(\bbX^4_3)$ spanned by $\ell$, $\ell_i$
is invariant under the Cremona involution $\phi$.
If $C = d\ell - \sum_i m_i \ell_i $ is an element in this subspace, then
$\phi(C) = d'\ell - \sum_i m_i' \ell_i$
where
\begin{align*}
d' &= 4d - 3\sum_i m_i  \\
m_i' &= d - \sum_{r\neq i} m_r  \\
\end{align*}
\end{itemize}
\end{corollary}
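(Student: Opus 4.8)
The plan is to obtain Corollary \ref{P4multiplicityonly} as a straightforward specialization of Proposition \ref{X43Cremonaformulas}: the only genuine content is to check that the two displayed subspaces are actually $\phi$-stable, after which the reduced formulas drop out of the intersection pairing exactly as in the proof of that proposition.

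For part (a), I would first record invariance. From Proposition \ref{CremonaP4} one reads off $\phi(S)=6S-3\sum_i S_i-\sum_{ij}P_{ij}$, $\phi(S_m)=3S-2\sum_{i\neq m}S_i-\sum_{ij\neq m}P_{ij}$, and $\phi(P_{mn})=-S+S_i+S_j+S_k+P_{ij}+P_{ik}+P_{jk}$ where $\{i,j,k\}$ is the complement of $\{m,n\}$; every one of these lies in the span $W:=\langle S,\,S_i,\,P_{ij}\rangle$, so by linearity $\phi(W)\subseteq W$. (Equivalently, in Proposition \ref{X43Cremonaformulas}(b) the output coefficients $n_{ij}'$, $m_{ijk}'$, $n_{ijk,i}'$ are $\mathbb{Z}$-linear in the input coefficients $n_{ij}$, $m_{ijk}$, $n_{ijk,i}$, hence vanish when the latter do.) Then the coefficient formulas come from the $A^2\cdot A^2$ table of Proposition \ref{X43}, which restricts on $W$ to the diagonal pairing $S\cdot S=p$, $S_i\cdot S_j=-p\,\delta_{i,j}$, $P_{ij}\cdot P_{kl}=p\,\delta_{ij,kl}$, all mixed products being zero. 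Writing $T=dS-\sum m_iS_i-\sum m_{ij}P_{ij}$ and $\phi(T)=d'S-\sum m_i'S_i-\sum m_{ij}'P_{ij}$, this diagonality gives $d'=\phi(T)\cdot S=T\cdot\phi(S)$, $m_i'=\phi(T)\cdot S_i=T\cdot\phi(S_i)$, and $m_{ij}'=-\phi(T)\cdot P_{ij}=-T\cdot\phi(P_{ij})$; substituting the three generator images and expanding through the diagonal pairing produces precisely $d'=6d-3\sum m_i+\sum m_{ij}$, $m_i'=3d-2\sum_{r\neq i}m_r+\sum_{rs\neq i}m_{rs}$, and $m_{ij}'=d-m_r-m_s-m_t+m_{rs}+m_{rt}+m_{st}$ for $\{r,s,t\}$ the complement of $\{i,j\}$.

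Part (b) runs in parallel and is shorter. Proposition \ref{CremonaP4} gives $\phi(\ell)=4\ell-\sum_i\ell_i$ and $\phi(\ell_m)=3\ell-\sum_{i\neq m}\ell_i$, both in $U:=\langle\ell,\ell_i\rangle$, so $U$ is $\phi$-invariant (equivalently, in Proposition \ref{X43Cremonaformulas}(c) setting $m_{ij}=m_{ijk}=0$ forces $m_{ij}'=m_{ijk}'=0$). The $A^1\cdot A^3$ table of Proposition \ref{X43} restricts so that $\ell\cdot H=p$ and $\ell_m\cdot E_i=-p\,\delta_{i,m}$ while $\ell\cdot E_i=\ell_m\cdot H=0$; hence for $C=d\ell-\sum m_i\ell_i$ one gets $d'=\phi(C)\cdot H=C\cdot\phi(H)$ and $m_i'=\phi(C)\cdot E_i=C\cdot\phi(E_i)$, and inserting $\phi(H)=4H-3\sum E_r-2\sum E_{rs}-\sum E_{rst}$ and $\phi(E_i)=H-\sum_{r\neq i}E_r-\sum_{rs\neq i}E_{rs}-\sum_{rst\neq i}E_{rst}$ yields $d'=4d-3\sum m_i$ and $m_i'=d-\sum_{r\neq i}m_r$.

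Everything here is mechanical once invariance is in hand, so I would foreground the invariance observation — it is immediate from Proposition \ref{CremonaP4} because the images of the chosen generators contain no $F$, $H$, $V$ terms (resp. no $\ell_{ij}$, $f_{ijk}$ terms) — and then let the coefficient formulas fall out of the restricted pairing exactly as in Proposition \ref{X43Cremonaformulas}. There is no real obstacle; the one thing to be careful about is that a priori $\phi(T)$ could have had components outside $W$, which is exactly what the invariance step rules out and what makes the ``dual basis'' extraction of $d'$, $m_i'$, $m_{ij}'$ legitimate.
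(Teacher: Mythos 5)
Your proposal is correct and follows essentially the paper's own route: the corollary is treated there as an immediate specialization of Proposition \ref{X43Cremonaformulas} (equivalently of Proposition \ref{CremonaP4}), with the only real content being the invariance of the subspaces — which the paper records in the remark preceding the corollary by noting that vanishing of the contact coefficients ($F$, $H$, $V$, resp.\ $\ell_{ij}$, $f_{ijk}$) is preserved, exactly the observation you make. Your explicit re-derivation of the coefficient formulas via the diagonal restriction of the $A^2\cdot A^2$ and $A^1\cdot A^3$ pairings is just the proof of Proposition \ref{X43Cremonaformulas} rerun on the invariant subspace, and your caution that invariance is what legitimizes the dual-basis extraction is exactly the right point.
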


For divisors, the natural subspace invariant under the involution
is that generated by the $E_{ij}$'s and $E_{ijk}$'s.
If we are only interested in the multiplicity conditions at the points,
we can therefore mod out by this subspace of $A^1$, and obtain the following.

\begin{corollary}\label{P4multiplicitypointsonlyA1}
The subspace of $A^1(\bbX^4_3)$ spanned by the $E_{ij}$'s and $E_{ijk}$'s
is invariant under the Cremona involution $\phi$.
Denote by $\bar{A}^1$ the quotient of $A^1$ by this subspace;
the involution $\phi$ descends to an involution of $\bar{A}^1$.
If $\bar{D} = dH - \sum_i m_i E_i$ represents a coset in this subspace, then
$\phi(\bar{D}) = d'H - \sum_i m_i' E_i$
where
\[
d' = 4d - \sum_i m_i \;\;\;\text{ and }\;\;\;
m_i' = 3d - \sum_{r\neq i} m_r.
\]
\end{corollary}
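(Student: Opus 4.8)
The plan is to deduce this corollary directly from Proposition~\ref{CremonaP4} and from part~(c) of Proposition~\ref{X43Cremonaformulas}, rather than redoing any intersection-theoretic computation. First I would verify the invariance claim: the subspace $W \subset A^1(\bbX^4_3)$ spanned by the $E_{ij}$'s and $E_{ijk}$'s is carried to itself by $\phi$. This is immediate from the formulas $\phi(E_{ij}) = E_{mnr\neq i,j}$ and $\phi(E_{ijk}) = E_{mn\neq i,j,k}$ in Proposition~\ref{CremonaP4}: $\phi$ simply permutes the generators of $W$ (sending the ``line'' exceptionals to ``plane'' exceptionals and vice versa, via complementation of index sets in $\{0,1,2,3,4\}$). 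Hence $\phi$ descends to a well-defined involution $\bar\phi$ on the quotient $\bar A^1 = A^1/W$, whose cosets are represented by classes of the form $\bar D = dH - \sum_i m_i E_i$.

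Next I would read off the action on the quotient. Working modulo $W$, Proposition~\ref{CremonaP4} gives $\phi(H) \equiv 4H - 3\sum_i E_i$ and $\phi(E_i) \equiv H - \sum_{r\neq i} E_r$, since the $E_{mn}$ and $E_{mnr}$ terms in both formulas lie in $W$ and are killed in the quotient. Therefore, for $\bar D = dH - \sum_i m_i E_i$,
\[
\phi(\bar D) \equiv d\Big(4H - 3\sum_i E_i\Big) - \sum_i m_i\Big(H - \sum_{r\neq i} E_r\Big)
= \Big(4d - \sum_i m_i\Big) H - \sum_i \Big(3d - \sum_{r\neq i} m_r\Big) E_i,
\]
which yields exactly $d' = 4d - \sum_i m_i$ and $m_i' = 3d - \sum_{r\neq i} m_r$. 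Alternatively, and perhaps more in keeping with the style of the preceding propositions, one can obtain the same formulas by pairing: $d' = \phi(\bar D)\cdot\ell = \bar D\cdot\phi(\ell)$ and $m_i' = \phi(\bar D)\cdot\ell_i = \bar D\cdot\phi(\ell_i)$, using $\phi(\ell) = 4\ell - \sum_r \ell_r$ and $\phi(\ell_m) = 3\ell - \sum_{r\neq m}\ell_r$ from Proposition~\ref{CremonaP4} together with the intersection table $H\cdot\ell = p$, $H\cdot\ell_i = 0$, $E_i\cdot\ell_j = -p\,\delta_{ij}$ from Proposition~\ref{X43}. Both routes agree because the curve classes $\ell,\ell_i$ pair to zero against all of $W$, so passing to $\bar A^1$ loses no information relevant to these coefficients.

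There is essentially no hard step here; the only thing to be slightly careful about is that the pairing computation of $d'$ and $m_i'$ is legitimate \emph{on the quotient}, i.e. that $\ell$ and $\ell_i$ annihilate $W$ so that $\bar D\cdot\phi(\ell)$ and $\bar D\cdot\phi(\ell_i)$ depend only on the coset $\bar D$. That follows from the $A^1\cdot A^3$ table in Proposition~\ref{X43}, where $E_{ij}$ and $E_{ijk}$ pair trivially with $\ell$ and with every $\ell_i$ (they pair nontrivially only with $\ell_{mn}$ and $f_{mnr}$, which are not among our test classes). Finally, $\bar\phi$ is an involution on $\bar A^1$ because $\phi$ is an involution on $A^1$ (this was already asserted, and can be checked against the displayed formulas: $d'' = 4(4d - \sum m_i) - \sum_i(3d - \sum_{r\neq i}m_r) = 16d - 4\sum m_i - 15d + 4\sum m_i = d$, and similarly $m_i'' = m_i$), completing the proof. $\blacksquare$
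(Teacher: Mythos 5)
Your proposal is correct and follows essentially the same route as the paper: the invariance of the span of the $E_{ij}$'s and $E_{ijk}$'s is immediate from $\phi(E_{ij})=E_{mnr\neq i,j}$ and $\phi(E_{ijk})=E_{mn\neq i,j,k}$ in Proposition \ref{CremonaP4}, and the formulas $d'=4d-\sum_i m_i$, $m_i'=3d-\sum_{r\neq i}m_r$ are exactly the first two formulas of Proposition \ref{X43Cremonaformulas}(a) read modulo that subspace. Your pairing argument with $\ell$ and $\ell_i$ (and the observation that these classes annihilate the subspace) is the same mechanism the paper uses, so there is nothing to add.
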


\section{Six and seven points in $\bbP^4$}
The formulas for how degrees and multiplicities change for curves, surfaces, and divisors in $\bbP^4$
under the standard Cremona transformation can be used to analyze compositions of such Cremona transformations
based at more than five points.
We will present the orbits of the linear subspaces spanned by subsets of the points in this section.

If we first consider six general points in $\bbP^4$,
it is easy to see using the formulas above that any line through $2$ of the six points,
$2$-plane through $3$ of them, or a hyperplane through $4$,
is either contracted by the Cremona transformation
or is sent to itself.

The case of seven general points in $\bbP^4$ is one step more interesting.
In this case, for a line through two of the seven points,
it is either contracted by the Cremona transformation based at five of the points
(if the two points are a subset of the five),
is sent to itself (if one of the two is a subset of the five)
or is sent to the rational normal quartic (RNQ) through all seven points
(if neither of the two is among the five).
 
 The iteration of Cremona now leads us to consider the transformation of the RNQ;
 applying Cremona at any five yields back the line joining the other two
 (since the Cremona is an involution).
 
 Hence the Cremona orbit of the line through two points is the collection of all of the $21$ lines,
 plus the rational normal quartic through all seven points.

 \begin{comment}
 The analysis of the surfaces now needs to take into account the possible multiplicity $\mu$ along the RNC.
 
 Suppose that we have a surface with the characters degree $d$,
 multiplicity $m_i$ at $p_i$ for each $i=1,\ldots, 7$,
 multiplicty $m_{ij}$ along the line $L_{ij}$ joining $p_i$ and $p_j$,
 and multiplicity $\mu$ along the rational normal quartic.
 
 Suppose now we perform a Cremona transformation at the first five points $p_1, \ldots,p_5$.
 Then the formulas of Corollary \ref{P4multiplicityonly}(a)
 give the tranformed degree $d'$ and multiplicities $m_i'$ (for $i \leq 5$) and $m_{ij}'$ (for $i,j \leq 5$)
 applying them only to indices at most five.
 I.e.,
\begin{align*}
d' &= 6d - 3\sum_{i=1}^5 m_i + \sum_{1\leq i < j \leq 5} m_{ij} \\
\text{for}\;\; i\leq 5: m_i' &= 3d - 2\sum_{\substack{ r=1 \\ r\neq i}}^5 m_r + \sum_{\substack{1\leq r < s\leq 5\\ r,s \neq i}} m_{rs} \\
\text{for}\;\; i,j\leq 5: m_{ij}' &= d - m_r-m_s-m_t +m_{rs}+m_{rt}+m_{st} \;\;\;\text{ for }\;\;\; r,s,t \in\{1,2,3,4,5\}-\{i,j\}\\ 
\end{align*}
\end{comment}

Now consider the $2$-plane spanned by three of the $7$ points.
Performing a Cremona transformation at $5$ of the $7$ points, we see that
if all three points are among the $5$, the plane is contracted as part of the fundamental locus.
If two of the three points are among the five, the plane is sent to itself.
If only one of the three points are among the five, then the Cremona image
is a surface of degree three,
with a point of multiplicity $3$ at that one point,
and multiplicity $1$ at the other six points.
It contains the line joining that one point to the other six, with multiplicity one each, and no other lines joining the points.
It also contains the RNQ with multiplicity one.
This cubic surface is a cone over a twisted cubic in $\bbP^3$.

Iterating the Cremona by applying it to this cone, we see that
if the five points contain the vertex, it will be transformed back into the $2$-plane.
If it does not, it is preserved.

Hence the Cremona orbit of the $2$-plane through $3$ points in $\bbP^4$ consists of the
$35$ planes and the $7$ cubic cones.

For the hyperplanes through $4$ of the seven points,
there are four cases to consider.
We choose five of the seven to perform the Cremona transformation at.
If all $4$ of the hyperplane points are among the five, then the hyperplane is contracted to a point.
If $3$ of the hyperplane points are among the five, then the hyperplane is transformed to another hyperplane.
If $2$ of the hyperplane points are among the five, it is transformed
into a quadric double cone: a cone over a smooth conic with vertex a line
(the line corresponding to the two points).
To be explicit, take the line joining the two points, and a complementary plane;
projection from the line to the plane sends the other five points to five general points in the plane,
and there is a unique conic in that plane through those five points.
The threefold is obtained as the cone over the conic with vertex the line.
The surfaces contains all the lines joining the two points with the other five,
as well as containing the RNQ too.

If we apply a second Cremona transformation to this quadric, we either
return to the hyperplane, preserve the quadric, or (if we use as the base points the five points
not on the vertex line) we obtain a cubic surface double at all seven points.
It is also double all along the RNQ; this cubic surface is the secant variety to the RNQ, in fact.

Further applications of Cremona to this cubic surface lowers the degree and returns us to the quadric double cone; we see then that the orbit of the hyperplane consists of the set of $35$ hyperplanes, the $21$ quadric double cones, and the cubic secant variety to the RNQ.

It is interesting that the two special linear systems with irreducible members in $\bbP^4$
imposing only double points appear here:
the quadrics double at two points and the cubics double at $7$.

\section{Eight points in $\bbP^4$}
We now consider the case of Cremona transformations based at $8$ general points $p_1,\dots,p_8$ in $\bbP^4$.
Denote by $L_{ij}$ the line joining $p_i$ and $p_j$ as usual.
Denote by $Q_i$ the rational normal quartic curve passing through all eight points except $p_i$
(i.e., passing through the other $7$).

%We record a curve of degree $d$ having multiplicity $m_i$ at $p_i$ by the vector $(d;m_1,\ldots,m_8)$.
It is easy to see, with a parallel computation as that done above for seven points,
that the orbit of a line through two points, say $L_{12}$,
consists of all $28$ such lines $L_{ij}$,
and all $8$ of the RNQ's $Q_k$.

We can now take up the case of surfaces, which is more involved.
We will record the data for a surface of degree $d$, having multiplicity $m_i$ at $p_i$,
multiplicity $n_i$ along $Q_i$, and multiplicity $m_{ij}$ along $L_{ij}$,
by the triangular array of numbers:
\begin{equation}\label{P4surface8points}
\begin{matrix}
d & m_1 & m_2 & m_3 & m_4 & m_5 & m_6 & m_7 & m_8 \\
  & n_1 & n_2 & n_3 & n_4 & n_5 & n_6 & n_7 & n_8 \\
  &    & m_{12} & m_{13} & m_{14} & m_{15} & m_{16} & m_{17} & m_{18} \\
    &    &  & m_{23} & m_{24} & m_{25} & m_{26} & m_{27} & m_{28} \\
    &    &  &   & m_{34} & m_{35} & m_{36} & m_{37} & m_{38} \\
    &    &  &   &   & m_{45} & m_{46} & m_{47} & m_{48} \\
    &    &  &   &   &  & m_{56} & m_{57} & m_{58} \\
    &    &  &   &   &  &  & m_{67} & m_{68} \\
    &    &  &   &   &  &  &  & m_{78} \\
\end{matrix}
\end{equation}
%The surface ($2$-plane) $L_{123}$ spanned by the first three points is recorded as:
%\[
%\begin{matrix}
%1 & 1 & 1 & 1 & 0 & 0 & 0 & 0 & 0 \\
%  & 0 & 0 & 0 & 0 & 0 & 0 & 0 & 0 \\
%   &    & 1 & 1 & 0 & 0 & 0 & 0 & 0 \\
%   &    &    & 1 & 0 & 0 & 0 & 0 & 0 \\
%   &    &    &    & 0 & 0 & 0 & 0 & 0 \\
%   &    &    &    &    & 0 & 0 & 0 & 0 \\
%   &    &    &    &    &    & 0 & 0 & 0\\
%   &    &    &    &    &    &    & 0 & 0 \\
%   &    &    &    &    &    &    &    & 0 \\
%\end{matrix}
%\]

Suppose we perform the five-point Cremona on the first five points $1,2,3,4,5$.
Then the degree $d$, the multiplicities $m_i$ for $i \leq 5$, and the $m_{ij}$ for $i,j \leq 5$,
are transformed as indicated in Corollary \ref{P4multiplicityonly}(a).

For multiplicity $m_{ij}'$ with $i \leq 5$ and $j \geq 6$,
we note that this line $L_{ij}$ is left invariant under the Cremona,
so that $m_{ij}' = m_{ij}$ for these indices.

For multiplicities $m_{ij}$ with both $i,j \geq 6$,
we note that this $L_{ij}$ is the image of $Q_k$ where $\{i, j,k\} = \{6,7,8\}$;
$k$ is the third index.
Hence $m_{ij}' = n_k$ for $k = \{6,7,8\}-\{i,j\}$.

For the $n_k'$ with $k \geq 6$, conversely we have $n_k' = m_{ij}$ where $i,j = \{6,7,8\}-\{k\}$.
For $n_k'$ with $k\leq 5$, since such a $Q_k$ is fixed, we have $n_k' = n_k$.
This gives the following:

\begin{corollary}\label{surfacecremonaformulas}
The surface with degree and multiplicities indicated by (\ref{P4surface8points}) is transformed,
under the Cremona involution based at the first five points $p_1,p_2,p_3,p_4,p_5$,
into the surface with degree and multiplicities recorded by:
\[
\begin{matrix}
d' & m_1' & m_2' & m_3' & m_4' & m_5' & m_6 & m_7 & m_8 \\
  & n_1 & n_2 & n_3 & n_4 & n_5 & m_{78} & m_{68} & m_{67} \\
  &    & m_{12}' & m_{13}' & m_{14}' & m_{15}' & m_{16} & m_{17} & m_{18} \\
    &    &  & m_{23}' & m_{24}' & m_{25}' & m_{26} & m_{27} & m_{28} \\
    &    &  &   & m_{34}' & m_{35}' & m_{36} & m_{37} & m_{38} \\
    &    &  &   &   & m_{45}' & m_{46} & m_{47} & m_{48} \\
    &    &  &   &   &  & m_{56} & m_{57} & m_{58} \\
    &    &  &   &   &  &  & n_8 & n_7 \\
    &    &  &   &   &  &  &  & n_6 \\
\end{matrix}
\]
where
\begin{align*}
d' &= 6d - 3\sum_{i=1}^5 m_i + \sum_{1\leq i < j \leq 5} m_{ij} \\
m_i' &= 3d - 2\sum_{r\leq 5; r\neq i} m_r + \sum_{r,s \leq 5; r,s \neq i} m_{rs} \;\;\text{for}\;\; i \leq 5\\
m_{ij}' &= d - m_r-m_s-m_t +m_{rs}+m_{rt}+m_{st}  \;\;\text{for}\;\; i,j \leq 5\;\;\mbox{and}\;\; r,s,t = \{1,2,3,4,5\}-\{ i,j\} \\
\end{align*}
\end{corollary}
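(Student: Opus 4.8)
The plan is to decompose the surface data $(d, m_i, n_i, m_{ij})$ into two pieces: the part ``internal'' to the five blow-up points $p_1,\dots,p_5$, on which the transformation is purely formal, and the ``spectator'' part (the points $p_6,p_7,p_8$, the lines carrying an index $\geq 6$, and the quartics $Q_k$), whose behaviour is governed by the geometric incidences already worked out for seven points.

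First I would observe that the coefficients $d$, the $m_i$ with $i\leq 5$, and the $m_{ij}$ with $i,j\leq 5$ are precisely the $S$-, $S_i$- and $P_{ij}$-coefficients (with indices in $\{1,\dots,5\}$) of the class in question, and that the fundamental locus of the five-point Cremona is disjoint from every locus recorded by the remaining coefficients. Consequently these coefficients transform exactly as in Corollary \ref{P4multiplicityonly}(a), with all sums restricted to $\{1,\dots,5\}$; this gives the formulas for $d'$, $m_i'$ $(i\leq 5)$ and $m_{ij}'$ $(i,j\leq 5)$ displayed in the statement.

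Next I would account for the spectator data one locus at a time. Since $p_6,p_7,p_8$ are not among the five base points, $m_6,m_7,m_8$ are unchanged. A line $L_{ij}$ with $i\leq 5<j$ meets the fundamental locus in the single point $p_i$ and, exactly as in the seven-point discussion above (with $p_8$ a spectator), is fixed by the Cremona, so $m_{ij}'=m_{ij}$. A line $L_{jk}$ with $j,k\geq 6$ avoids all five base points; applying the seven-point analysis to the seven points $\{1,\dots,5\}\cup\{j,k\}$ shows it is carried to the rational normal quartic through those seven points, which is $Q_i$ with $\{i,j,k\}=\{6,7,8\}$. Hence $m_{jk}$ becomes the multiplicity along $Q_i$, i.e.\ $n_i'=m_{jk}$, and by involutivity $m_{jk}'=n_i$; this produces the interchange of $\{m_{67},m_{68},m_{78}\}$ with $\{n_8,n_7,n_6\}$ recorded in the array. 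Finally, for $k\leq 5$ the quartic $Q_k$ passes through only four of the five base points, and a direct check of its multiplicity-only class $4\ell-\sum_{r\leq 5,\, r\neq k}\ell_r-\ell_6-\ell_7-\ell_8$ against the curve formulas of Corollary \ref{P4multiplicityonly}(b) (with $p_6,p_7,p_8$ spectators) shows it maps to a class with the same degree and multiplicities, so $Q_k$ is fixed and $n_k'=n_k$. Substituting all of this into the array (\ref{P4surface8points}) yields the claimed output array.

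The one place requiring genuine care --- the \textbf{main obstacle} --- is justifying that the multiplicity bookkeeping is compatible with these birational substitutions: that a surface singular to order $n_i$ along $Q_i$ has Cremona image singular to order exactly $n_i$ along the interchanged line $L_{jk}$ (and symmetrically for the lines), so that the class-level identities of Corollary \ref{P4multiplicityonly} genuinely compute the transformed geometric multiplicities and not merely a virtual quantity. This is where one invokes the explicit resolution of the Cremona map and the fact, established for seven points above, that $Q_i$ and $L_{jk}$ are proper transforms of one another, rather than only the formal Chow-ring formulas of Proposition \ref{CremonaP4}. Once that compatibility is secured, the corollary is a direct substitution.
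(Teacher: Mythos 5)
Your proposal is correct and follows essentially the same route as the paper: apply Corollary \ref{P4multiplicityonly}(a) to $d$, the $m_i$ with $i\leq 5$ and the $m_{ij}$ with $i,j\leq 5$, observe that lines with exactly one index among the base points are fixed, that $L_{ij}$ with $i,j\geq 6$ is interchanged with $Q_k$ for $\{i,j,k\}=\{6,7,8\}$, and that $Q_k$ with $k\leq 5$ is fixed, exactly as in the seven-point discussion the paper invokes. The only cosmetic slip is writing the class of $Q_k$ with terms $\ell_6,\ell_7,\ell_8$ that do not live in the Chow ring of the five-point resolution, but since you treat $p_6,p_7,p_8$ as spectators the computation you perform is the intended one.
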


The Proposition below presents the orbit of $L_{123}$, a $2$-plane through three of the points, in (b).
For notational consistency with the other surfaces in this orbit, we will also denote $L_{ijk}$ by $S_1(ijk)$.
We have included in (a) the remarks above about the orbit of the line $L_{12}$.
In (c) we present the orbit of a hyperplane; the reader can verify the computations as an exercise.

\begin{proposition}\label{P4orbits}
Fix $8$ general points in $\bbP^4$,
and consider Cremona transformations based at $5$ of the $8$, in series.
\begin{itemize}
\item[(a)]
The orbit of a line through two of the $8$ points
consists of the $28$ lines $L_{ij}$ ($1 \leq i < j \leq 8$) through two ($p_i$ and $p_j$) of the $8$ points,
and the $8$ rational normal quartics $Q_k$ ($1 \leq k \leq 8$
through $7$ of the $8$ points (through all seven except $p_k$).
\item[(b)]
The orbit of a plane through three of the $8$ points
consists of:
\begin{itemize}
\item[(b1)]
the $56$ planes $L_{ijk} = S_1(1jk)$ through three of the $8$ points (namely $p_i$, $p_j$, and $p_k$);
the plane $L_{123}=S_1(123)$ is recorded as
\[
\begin{matrix}
1 & 1 & 1 & 1 & 0 & 0 & 0 & 0 & 0 \\
   & 0 & 0 & 0 & 0 & 0 & 0 & 0 & 0 \\
   &    & 1 & 1 & 0 & 0 & 0 & 0 & 0 \\
   &    &    & 1 & 0 & 0 & 0 & 0 & 0 \\
   &    &    &    & 0 & 0 & 0 & 0 & 0 \\
   &    &    &    &    & 0 & 0 & 0 & 0 \\
   &    &    &    &    &    & 0 & 0 & 0\\
   &    &    &    &    &    &    & 0 & 0 \\
   &    &    &    &    &    &    &    & 0 \\
\end{matrix}
\]
\item[(b2)]
the $56$ surfaces $S_3(i,j)$ of degree $3$ with one point $p_i$ of multiplicity $3$, $6$ points of multiplicity one, and one point $p_j$ of multiplicity $0$.
It contains the lines joining the triple point $p_i$ to all other multiplicity one points $p_k$ ($k \neq j$) and no other lines;
it contains the rational normal quartic $Q_j$ through the triple point and the six multiplicity one points.
For example, $S_3(81)$ is recorded as:
\[
\begin{matrix}
3 & 0 & 1 & 1 & 1 & 1 & 1 & 1 & 3 \\
   & 1 & 0 & 0 & 0 & 0 & 0 & 0 & 0 \\
   &    & 0 & 0 & 0 & 0 & 0 & 0 & 0 \\
   &    &    & 0 & 0 & 0 & 0 & 0 & 1 \\
   &    &    &    & 0 & 0 & 0 & 0 & 1 \\
   &    &    &    &    & 0 & 0 & 0 & 1 \\
   &    &    &    &    &    & 0 & 0 & 1\\
   &    &    &    &    &    &    & 0 & 1 \\
   &    &    &    &    &    &    &    & 1 \\
\end{matrix}
\]
\item[(b3)]
the $56$ sextic surfaces $S_6(ijk)$ of degree $6$ with three points ($p_i$, $p_j$, $p_k$) of multiplicity one,
and the other $5$ points of multiplicity $3$.
It contains the lines joining any two of the multiplicity $3$ points and no other lines;
It contains the rational normal quartics through the five multiplicity $3$ points and any two of the three multiplicity one points.
For example, $S_6(678)$ is recorded as:
\[
\begin{matrix}
6 & 3 & 3 & 3 & 3 & 3 & 1 & 1 & 1 \\
   & 0 & 0 & 0 & 0 & 0 & 1 & 1 & 1 \\
   &    & 1 & 1 & 1 & 1 & 0 & 0 & 0 \\
   &    &    & 1 & 1 & 1 & 0 & 0 & 0 \\
   &    &    &    & 1 & 1 & 0 & 0 & 0 \\
   &    &    &    &    & 1 & 0 & 0 & 0 \\
   &    &    &    &    &    & 0 & 0 & 0\\
   &    &    &    &    &    &    & 0 & 0 \\
   &    &    &    &    &    &    &    & 0 \\
\end{matrix}
\]
\item[(b4)]
the $28$ surfaces $S_{10}(ij)$ of degree $10$ with two points ($p_i$ and $p_j$) of multiplicity $6$
and the other $6$ points of multiplicity $3$.
It contains the lines joining the multiplicity one points to the multiplicity six points (each with multiplicity one)
and the line joining the two multiplicity $6$ points with multiplicity $3$.
It contains the $6$ rational normal quartics that pass through the two multiplicity $6$ points and five of the six multiplicity one points.
For example, $S_{10}(78)$ is recorded as:
\[
\begin{matrix}
10 & 3 & 3 & 3 & 3 & 3 & 3 & 6 & 6 \\
    & 1 & 1 & 1 & 1 & 1 & 1 & 0 & 0 \\
    &    & 0 & 0 & 0 & 0 & 0 & 1 & 1 \\
    &    &    & 0 & 0 & 0 & 0 & 1 & 1 \\
    &    &    &    & 0 & 0 & 0 & 1 & 1 \\
    &    &    &    &    & 0 & 0 & 1 & 1 \\
    &    &    &    &    &    & 0 & 1 & 1 \\
    &    &    &    &    &    &    & 1 & 1 \\
    &    &    &    &    &    &    &    & 3 \\
\end{matrix}
\]
\item[(b5)]
the $8$ surfaces $S_{15}(i)$ of degree $15$ with one point ($p_i$) of multiplicity $3$ and the other seven points of multiplicity $6$.
It contains the joining any two points of multiplicity $6$, and no other lines.
It contains all $8$ of the rational normal quartics; the one through the seven multiplicity $6$ points with multiplicity three, and all others with multiplicity one.
For example, $S_{15}(1)$ is recorded as:
\[
\begin{matrix}
15 & 3 & 6 & 6 & 6 & 6 & 6 & 6 & 6 \\
    & 3 & 1 & 1 & 1 & 1 & 1 & 1 & 1 \\
    &    & 0 & 0 & 0 & 0 & 0 & 0 & 0 \\
    &    &    & 1 & 1 & 1 & 1 & 1 & 1 \\
    &    &    &    & 1 & 1 & 1 & 1 & 1 \\
    &    &    &    &    & 1 & 1 & 1 & 1 \\
    &    &    &    &    &    & 1 & 1 & 1 \\
    &    &    &    &    &    &    & 1 & 1 \\
    &    &    &    &    &    &    &    & 1 \\
\end{matrix}
\]
\end{itemize}
\item[(c)] We use the notation that $(d;m_1m_2\cdots m_8)$ represents a hyperplane of degree $d$
having multiplicity $m_i$ at $p_i$.
The orbit of the hyperplane through the first four points (represented by $(1;11110000)$
consists of the following divisors,
and all related divisors obtained by permutations of the eight points:
\begin{align*}
&(1;11110000) &
&(2;22111110) &
&(3;22222220) &
&(3;32222111) \\
&(4;33332221) &
&(4;43222222) &
&(5;44333322) &
&(6;44444432) \\
&(6;54443333) &
&(7;55544443) &
&(7;64444444) &
&(8;65555544) \\
&(9;66665555) & 
&(10;76666666)& & \\
\end{align*}
\end{itemize}
\end{proposition}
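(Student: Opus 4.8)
The plan is to compute each orbit by hand from the explicit Cremona formulas, using that the Weyl group of $X^4_8$ is generated by $S_8$ (permuting $p_1,\dots,p_8$) together with a single five-point Cremona transformation, whose effect on degrees and multiplicities is given by Corollaries~\ref{P4multiplicityonly}, \ref{P4multiplicitypointsonlyA1} and~\ref{surfacecremonaformulas}. Every class in the statement is a symmetric function of a suitable subset of the indices, so after relabelling by an element of $S_8$ it is enough to apply the \emph{standard} five-point Cremona (based at $p_1,\dots,p_5$) to one representative of each $S_8$-orbit and identify the result. Because this Cremona is only birational, it may ``contract'' a given class --- formally, its degree may drop below the value forced by the multiplicities, or the whole array may collapse to zero; such degenerate images are discarded, since the strict transform is then not a surface (resp.\ curve, divisor) and so is not a member of the orbit. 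Each part then has the same two ingredients: \emph{membership} (a word in the generators carrying the base object to each listed type) and \emph{closure} (applying the standard Cremona to each listed type, in each essentially distinct position of $\{1,\dots,5\}$ relative to its distinguished indices, lands on the list or contracts).

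For part (a) the argument runs exactly parallel to the seven-point case treated above. Using Corollary~\ref{P4multiplicityonly}(b) and the description of $Q_k$ as an iterated Cremona image of a line, one checks that under the Cremona based at a $5$-subset $S$: the line $L_{ij}$ is contracted when $\{i,j\}\subseteq S$, is fixed when $|\{i,j\}\cap S|=1$, and is sent to $Q_k$ with $\{k\}=\{1,\dots,8\}\setminus(S\cup\{i,j\})$ when $\{i,j\}\cap S=\emptyset$; dually $Q_k$ is fixed when $k\in S$ and is sent to the line on the two indices of $(\{1,\dots,8\}\setminus\{k\})\setminus S$ when $k\notin S$. Hence $\{L_{ij}\}\cup\{Q_k\}$ is closed under the Weyl action; it is a single orbit because $L_{12}\mapsto Q_8$ under the Cremona at $\{3,4,5,6,7\}$ and every $L_{ij}$, $Q_k$ is then reached by a permutation.

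For part (b) I would encode a surface by the triangular array~\eqref{P4surface8points} and take Corollary~\ref{surfacecremonaformulas}, together with $S_8$, as the Weyl action on such arrays. The five families $S_1,S_3,S_6,S_{10},S_{15}$ are single $S_8$-orbits of sizes $56,56,56,28,8$, giving $204$ surfaces, so it remains to see that the orbit of $S_1(123)$ is exactly this set. For membership I would look for linking transformations such as: the Cremona at the five points off the plane carries $S_1(678)$ to $S_6(678)$; a Cremona at a $5$-set meeting $\{1,2,3\}$ in one index carries $S_1(123)$ to an $S_3$; the Cremona at the two multiplicity-$6$ indices of an $S_{10}$ together with three of its six multiplicity-$3$ indices carries $S_{10}$ to an $S_6$, and the Cremona at five of those six multiplicity-$3$ indices carries $S_{10}$ to an $S_{15}$; similarly $S_3$ links to $S_6$. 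These assemble into a connected chain through all five families, so the orbit contains every listed surface. For closure I would fix one representative of each family and run the map of Corollary~\ref{surfacecremonaformulas} through every essentially distinct position of $\{1,\dots,5\}$ relative to its distinguished indices (four positions each for $S_1$, $S_3$ and $S_6$; three for $S_{10}$; two for $S_{15}$), checking each time that the output array is again one of the five types or collapses to zero --- the latter happening precisely for $S_1(ijk)$ with $\{i,j,k\}\subseteq\{1,\dots,5\}$. Choosing representatives that push as many distinguished indices as possible outside $\{1,\dots,5\}$ keeps each individual calculation short.

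For part (c) only multiplicity-at-points data is needed, so I would work in the quotient $\bar A^1$ of Corollary~\ref{P4multiplicitypointsonlyA1}, where the standard Cremona sends $(d;m_1,\dots,m_8)$ to $(d';m_1',\dots,m_8')$ with $d'=4d-\sum_{i\le5}m_i$, $m_i'=3d-\sum_{r\le5,\,r\ne i}m_r$ for $i\le5$, and $m_j'=m_j$ for $j\ge6$; e.g.\ a Cremona through two of the four hyperplane points carries $(1;11110000)$ to $(2;22111110)$. Iterating this together with $S_8$ from $(1;11110000)$ generates the fourteen tuples listed (with the asserted orbit sizes), and closure is verified as before by pushing each tuple through the standard Cremona for every position of $\{1,\dots,5\}$ among the eight indices, landing on the list or contracting (degree $0$, the case where all four hyperplane points lie in the base). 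The only real obstacle is the bulk of the closure bookkeeping in part (b): there is no conceptual difficulty, but the interchange of the line-multiplicities $m_{ij}$ ($i,j\in\{6,7,8\}$) with the quartic-multiplicities $n_i$, and the way these feed back into the transformed degree through $\sum_{1\le i<j\le5}m_{ij}$, make a complete and carefully organized case analysis essential.
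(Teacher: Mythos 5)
Your proposal is correct and follows essentially the same route as the paper: the paper likewise derives the orbits by direct computation with the transformation rules of Corollaries \ref{P4multiplicityonly}, \ref{surfacecremonaformulas} and \ref{P4multiplicitypointsonlyA1}, tracking case-by-case how many distinguished indices lie in the five base points (exactly as in its seven-point discussion), with contracted images set aside and part (c) left as the same kind of bookkeeping. Your explicit organization into membership (linking Cremonas between the five families) and closure (checking each relative position of the base set) is just a more systematic write-up of that computation.
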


\section{Applications}\label{applicationssection}

\begin{proposition}\label{intersection}
Let $R$ and $T$ be two Weyl planes on $X_{4,8}$. Then $R \cdot T \in \{0,1,3\}$.
\end{proposition}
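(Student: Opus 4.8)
The plan is to use the Weyl symmetry to reduce to the case where $R$ is a plane through three of the points, and then to finish with a short finite computation of intersection numbers in the Chow ring of $Y^4_8$.

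First I would record the intersection pairing that is needed. Every Weyl--plane class lies in the subspace $V\subset A^2(Y^4_8)$ spanned by $S$, the $S_i$, the $P_{ij}$, and one additional class $q_k$ attached to each rational normal quartic $Q_k$ (the analogue of $P_{ij}=G_{ij}-F_{ij}$ for the exceptional divisor over $Q_k$); that a Weyl plane has no further, ``higher--contact'' components follows by carrying a plane through the transformation rules of Corollaries \ref{P4multiplicityonly} and \ref{surfacecremonaformulas}. On $V$ the intersection form is diagonal,
\[
S\cdot S=1,\qquad S_i\cdot S_i=-1,\qquad P_{ij}\cdot P_{ij}=1,\qquad q_k\cdot q_k=1,
\]
with distinct basis vectors pairing to $0$: each line $L_{ij}$ and each quartic $Q_k$, once the points on it have been blown up, has normal bundle $\calO(-1)^{\oplus 3}$, so the exceptional divisor over it is isomorphic to $\bbP^1\times\bbP^2$ exactly as for the $E_{ij}$ of Proposition \ref{X43}, and the vanishing of the off--diagonal pairs records that the $8$ points, the $28$ lines $L_{ij}$ and the $8$ quartics $Q_k$ are pairwise disjoint in $X^4_8$. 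Since a Weyl plane with the invariants $(d;m_i;n_i;m_{ij})$ displayed in \eqref{P4surface8points} has class $dS-\sum_i m_iS_i-\sum_{i<j}m_{ij}P_{ij}-\sum_k n_kq_k$, this gives, for two Weyl planes with invariants $(d;m_i;n_i;m_{ij})$ and $(d';m_i';n_i';m_{ij}')$,
\[
R\cdot T \;=\; dd'\;-\;\sum_i m_im_i'\;+\;\sum_{i<j}m_{ij}m_{ij}'\;+\;\sum_k n_kn_k'.
\]

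Next I would cut the problem down to finitely many cases. By Proposition \ref{P4orbits}(b) every Weyl plane lies in the single Weyl orbit of a plane through three points, and the intersection number of two Weyl planes is unchanged when the same Weyl element is applied to both (the symmetric group acts biregularly, and one checks that the Cremona transformation formulas are isometries of the form above). So I may take $R=S_1(123)$, the proper transform of the plane through $p_1,p_2,p_3$, with class $S-S_1-S_2-S_3-P_{12}-P_{13}-P_{23}$; the residual symmetry $S_{\{1,2,3\}}\times S_{\{4,\dots,8\}}$ fixes $R$, so it only remains to evaluate
\[
R\cdot T \;=\; d_T-\bigl(m_1^T+m_2^T+m_3^T\bigr)+\bigl(m_{12}^T+m_{13}^T+m_{23}^T\bigr)
\]
for one representative $T$ of each of the five types of Proposition \ref{P4orbits}(b) and each incidence pattern of the defining indices of $T$ with $\{1,2,3\}$. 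Reading the triangular arrays against this formula gives: for $T=S_1(abc)$, the values $1,0,0,1$ as $|\{a,b,c\}\cap\{1,2,3\}|=3,2,1,0$; for $T=S_3(a,b)$, the value $1$ exactly when the triple point $a$ lies outside $\{1,2,3\}$ while the omitted point $b$ lies in it, and $0$ otherwise; for $T=S_6(abc)$, the values $3,1,0,0$ as $|\{a,b,c\}\cap\{1,2,3\}|=3,2,1,0$ (so the value $3$ occurs exactly for the pair $S_1(123),S_6(123)$); for $T=S_{10}(ab)$, the values $0,0,1$ as $|\{a,b\}\cap\{1,2,3\}|=2,1,0$; and for $T=S_{15}(a)$, the values $1,0$ as $a\in\{1,2,3\}$ or not. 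These exhaust the possibilities, and every value lies in $\{0,1,3\}$, which is the assertion.

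The step I expect to be the real obstacle is the one passed over quickly above: pinning down the intersection form on $A^2(Y^4_8)$, and in particular justifying that the five--point Cremona acts on the relevant subspace of this Chow ring as an \emph{isometry}, so that the reduction to $R=S_1(123)$ is legitimate. Because $Y^4_8$ does not by itself resolve the five--point Cremona (the relevant coordinate $2$--planes still lie in the Cremona's indeterminacy locus), one has to either pass to a common resolution --- further blowing up the proper transforms of those $2$--planes, where the construction of $\bbX^4_3$ applies --- and transport the pairing, or else verify by direct linear algebra that the maps of Corollaries \ref{P4multiplicityonly} and \ref{surfacecremonaformulas} preserve the bilinear form above. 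A companion point is to confirm that Weyl--plane classes carry no $F_{ij}$-, $H_{ijk}$- or $V$-type terms beyond the multiplicities recorded in \eqref{P4surface8points}. If the clean reduction turns out to be delicate, one can drop the Cremona altogether and run the same computation using only the $S_8$ action, at the cost of enumerating unordered pairs of the five types.
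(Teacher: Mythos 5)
Your argument is correct and takes essentially the paper's approach: reduce $R$ to $S_1(123)$ using Weyl-invariance of the pairing, then do a finite check against the classification of Proposition \ref{P4orbits}(b) (your case-by-case values agree with the paper's own computations). The paper also records a further shortcut you don't use --- one more five-point Cremona sends $S_1(123)$ to $-P_{45}$, so the intersection with any Weyl plane is simply its multiplicity $m_{45}$, visibly in $\{0,1,3\}$ --- which sidesteps writing out the intersection form explicitly.
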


\begin{proof}
If we choose an element $w$ of the Weyl group that sends Weyl plane $R$ to the actual plane $S_1(123)$,
then since the intersection form is preserved we have $R \cdot T = S_1(123) \cdot w(T)$.
Hence it suffices to show that the intersection of $S_1(123)$ with any Weyl plane is in $\{0,1,3\}$.
This one can check by hand for all of the cases.

Even easier would be to notice that, if $\phi$ is the Cremona transformation centered at the first five points,
then by Corollary \ref{surfacecremonaformulas} we have
$\phi(S_1(123)) = -P_{45}$ in the Chow ring.
Hence it also suffices to show that $-P_{45} \cdot T \in \{0, 1, 3\}$ for all Weyl planes $T$.
By Proposition \ref{X43}, intersecting with $-P_{45}$ picks out exactly the multiplicity $m_{45}$ for the Weyl plane.
Hence it suffices, after taking account of permutations,
to observe that for all Weyl planes, all $m_{ij}$ are in $\{0,1,3\}$.
\end{proof}

\begin{comment}
Applying formulas of Corollary 8.2 we can see that $Cr(H_{123})= - P_{45}$ where $Cr$ is the Standard Cremona transformation based on points $\{1,2,3,4,5\}$. 
Therefore there exists an element of the Weyl group of $X_{4,8}$, $w$, so that $w(S_{i})=-P_{45}$ and denote by $S_{k}=w(S_{j})$. 

\begin{equation}\label{eq2} 
	\begin{split}
S_{i} \cdot S_{j}&=w(S_{i}) \cdot w(S_{i})\\
&=-P_{45} \cdot S_{k} \in \{0,1,3\}
\end{split}
	\end{equation}

We recall Proposition 5.5 implies that $P_{45}\cdot P_{45}=1$ and $P_{45}$ doesn't intersects $S$, $S_i$ and other $P_{ij}$. On the other hand the Weyl surfaces \eqref{eq1} have all lines
$P_{ij}$ blown up either once or three times. 

\item If $S_{i} \cdot S_{j} = 1$, then there exists $w$ in Weyl group of $X^{4}_{8}$ so that \\
$w(S_{i})=H_{opq}$ and $w(S_{j})=H_{rst}$.
\item If $S_{i} \cdot S_{j} = 0$, then there exists $w$ in Weyl group of $X^{4}_{8}$ so that \\
$w(S_{i})=H_{opq}$ and $w(S_{j})=H_{ost}$.
\end{comment}

%We denote $W^{4}_8$ to be the Weyl group of $X^{4}_{8}$ .

\begin{proposition}\label{twoweylplanes}
Let $R$ and $T$ be any Weyl planes on $X^{4}_{8}$.
If $R \cdot T \neq 3$, then there exists $w$ in the Weyl group of $X^{4}_{8}$
such that $w(R)=H_{123}$ and $w(T)=H_{456}$.
\end{proposition}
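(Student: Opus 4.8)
The plan is to reduce to a finite check, exploiting transitivity of the Weyl group on Weyl planes together with the intersection data already tabulated. First I would use Proposition \ref{P4orbits}(b) and the fact that the Weyl group acts transitively on Weyl planes to replace $R$ by the standard plane $H_{123}=S_1(123)$: choose $w_0$ in the Weyl group with $w_0(R)=H_{123}$, and then it suffices to prove the statement for $R=H_{123}$ and $T$ an arbitrary Weyl plane with $H_{123}\cdot T\neq 3$. So the problem becomes: classify, up to the stabilizer of $H_{123}$ in the Weyl group, the Weyl planes $T$ meeting $H_{123}$ in intersection number $0$ or $1$, and exhibit for each an element carrying the pair $(H_{123},T)$ to $(H_{123},H_{456})$.

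Next I would run through the five types of Weyl planes listed in Proposition \ref{P4orbits}(b), computing $H_{123}\cdot T$ in each case. As in the proof of Proposition \ref{intersection}, the clean way to compute $H_{123}\cdot T$ is to apply the Cremona $\phi$ based at $\{1,2,3,4,5\}$: by Corollary \ref{surfacecremonaformulas} (equivalently the computation $\phi(S_1(123))=-P_{45}$ used above), $H_{123}\cdot T=-P_{45}\cdot\phi(T)$, and by Proposition \ref{X43} this is exactly the $m_{45}$-coefficient of $\phi(T)$, i.e. the multiplicity of $\phi(T)$ along $L_{45}$. One then reads off from the tableaux in Proposition \ref{P4orbits} which Weyl-plane types, and which index patterns within a type, give $m_{45}\in\{0,1\}$ versus $m_{45}=3$; the value $3$ occurs precisely for the $S_{10}$ and $S_{15}$ families in the appropriate index positions, while the planes $S_1$, the cubics $S_3$, and the sextics $S_6$ always contribute $0$ or $1$.

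Then, for each of the finitely many combinatorial configurations with $H_{123}\cdot T\in\{0,1\}$, I would produce an explicit Weyl-group element — a word in coordinate permutations and standard Cremonas — fixing $H_{123}$ (or at least sending it to one of the $56$ planes, which can then be permuted back) and sending $T$ into the plane $H_{456}$. The two cases $H_{123}\cdot T=1$ and $H_{123}\cdot T=0$ correspond respectively to the combinatorics $\{1,2,3\}\cap\{4,5,6\}=\emptyset$ with the planes ``linked'' and to $\{1,2,3\}$, $\{4,5,6\}$ truly disjoint; the point is that in both sub-cases the normalized pair is $(H_{123},H_{456})$ up to permutation, so once $T$ is shown to be Weyl-equivalent to a genuine plane $H_{pqr}$ with the same intersection pattern against $H_{123}$, a final permutation of the eight points finishes it. Concretely: a Cremona at $\{1,2,3,4,5\}$ (or at the five ``heavy'' points of $T$) lowers the degree of $T$ and, iterating, walks $T$ down its orbit to a plane $S_1$; tracking what happens to $H_{123}$ under the same transformations, and correcting by a permutation, yields $w$ with $w(T)=H_{456}$ and $w(H_{123})=H_{123}$.

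The main obstacle I anticipate is bookkeeping rather than conceptual: one must verify that the degree-lowering Cremona used to push $T$ toward a plane can always be chosen so that its effect on $H_{123}$ is controlled (ideally it fixes $H_{123}$ or moves it only within the $56$ planes), and that after the unavoidable final permutation the two planes really land on the disjoint index sets $\{1,2,3\}$ and $\{4,5,6\}$ rather than on overlapping sets — which is where the hypothesis $R\cdot T\neq 3$ is exactly what rules out the bad overlap pattern. Keeping the orbit-walk for $T$ and the induced action on $H_{123}$ synchronized across the case analysis is the delicate part, but each individual case is a short finite computation using Corollary \ref{surfacecremonaformulas} and Proposition \ref{X43}.
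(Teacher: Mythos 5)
Your proposal follows essentially the same route as the paper's proof: reduce $R$ to $S_1(123)$ via the $-P_{45}$/Cremona trick of Proposition \ref{intersection}, read off $R\cdot T$ as the coefficient $m_{45}$ of $\phi(T)$, and then walk $T$ down its orbit by degree-lowering Cremona transformations that leave the plane fixed, finishing with a permutation. The only ingredient the paper supplies that you leave as your anticipated ``bookkeeping obstacle'' is the precise synchronization criterion: choose the five Cremona base points so that exactly two of them lie in $\{1,2,3\}$ (which automatically fixes $S_1(123)$) while still lowering $\deg T$, and the paper simply exhibits such an index set explicitly in each of the finitely many cases.
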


\begin{proof}
It is enough to prove the statement for $R \neq T$.
One can use the same technique as in Proposition \ref{intersection}
and reduce one Weyl surface to $-P_{45}$ and select Weyl surfaces from the list of Proposition \ref{P4orbits}(b)
that have $m_{45}\in \{0,1\}$.
Then applying the Cremona transformation $\phi$ centered at the first five points,
we have the first Weyl surface being $S_1(123)$
and the other on the following lists (up to permutations that fix $\{1,2,3\}$):

\begin{enumerate}
\item Case $S_1(123)\cdot T = 1$:
%Assume $S_{i} \cdot S_{j} =1$, in particular we prove have that all indices \{o,p,q,r,s,t\} are different. Up to a Weyl group element surfaces  $S_{i} \cdot S_{j} =1$ can be reduced to

\begin{enumerate}
\item $S_1(123) \cdot S_1(456) =1$
\item $S_1(123) \cdot S_3(4, 1)=1$ 
\item $S_1(123) \cdot S_6(126)=1$
\item $S_1(123) \cdot S_{10}(45)=1$
\item $S_1(123) \cdot S_{15}(1)=1$
\end{enumerate}

We are done in the first case of course.
In the other cases it suffices to find five indices, two of them among $\{1,2,3\}$,
so that the corresponding Cremona transformation reduces the degree of the second surface;
such a Cremona will fix $S_1(123)$ and we proceed then by induction on the degree.

To reduce the cubic surface, $\{2,3,4,7,8\}$ will work;
for the sextic, $\{1,3,5,7,8\}$ works.
For the surface of degree $10$, $\{1,2,4,5,6\}$ suffices;
finally for the last surface of degree $15$, $\{2,3,6,7,8\}$ works.

%To prove the statement it is enough to prove the statement for 1(e).\\ Let $w_4:=Cr_{23678}$, then 1(e) implies $H_{123} \cdot S^{10}_{45}=1$.
% By the action of $w_3:=Cr_{12456}$, last relation becomes $H_{123} \cdot S^{6}_{126}=1$. Let $w_2=Cr_{13578}$, then  $H_{123} \cdot S^{3}_{4,\widehat{1}}=1$. Finally, applying $w_1=Cr_{234  78}$  we obtain $H_{123}\cdot H_{456}=1$. We conclude that if $w:=w_1 \circ w_2 \circ w_3 \circ w_4$ then $w(H_{123})=H_{123}$ and $w(S^{15}_{3})=H_{456}$.

\item Case $S_1(123) \cdot T = 0$:
%Assume $S_{i} \cdot S_{j} =0$, in particular we prove have that all indices \{o,p,q,r,s,t\} are not all different. Up to a Weyl group element surfaces  $S_{i} \cdot S_{j} =1$ can be reduced to
In this case a similar approach yields the following lists to analyze:
\begin{enumerate}
\item $S_1(123) \cdot S_1(145) =0$ or $S_1(123) \cdot S_1(124)=0$ 
\item $S_1(123) \cdot S_{3}(1, 2)=0$ or $S_1(123) \cdot S_{3}(1, 4)=0$ or $S_1(123) \cdot S_{3}(4,5)=0$ 
\item $S_1(123) \cdot S_{6}(145)=0$ or $S_1(123) \cdot S_{6}(456)=0$
\item $S_1(123) \cdot S_{10}(12)=0$ or $S_1(123) \cdot S_{10}(15)=0$ 
\item $S_1(123) \cdot S_{15}(4)=0$
\end{enumerate}

The same proof as in the prior case works; in each situation one finds five indices, two among $\{1,2,3\}$,
that reduce the degree of the second surface.
For example, $\{2,3,6,7,8\}$ works for the degree $15$ surface.
We leave the details of the other cases to the reader.

%2(e). Let $w_3:=Cr_{23678}$, then 1(e) implies $H_{123} \cdot S^{10}_{15}=1$.
%By the action of $w_2:=Cr_{12456}$, we get $H_{123} \cdot S^{6}_{246}=1$. Finally, $w_1=Cr_{13578}$, then  $H_{123} \cdot H_{246}=1$. We conclude that if $w:=w_1 \circ w_2 \circ w_3$ then $w(H_{123})=H_{123}$ and $w(S^{15}_{4})=H_{246}$. The other cases are similar, we leave the details to the reader.

\end{enumerate}
\end{proof}

We remark that Weyl planes that intersect in three points (modulo permutations of points) are
$$
S_1(123) \cdot S_6(123)=S_3(1,8) \cdot S_3(8,1) =3.
$$

%Let $D:=dH-\sum_{i=1}^8 m_i E_i \in\ Pic (X^{4}_{8})$.

\begin{corollary}\label{disj}
Assume $R$ and $T$ are Weyl planes in the base locus of the linear system $|D|$
for an effective divisor $D= dH-\sum_{i=1}^8 m_i E_i$ on $X^{4}_{8}$.
Then $R \cdot T=0$.
\end{corollary}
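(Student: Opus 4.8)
The plan is to combine Propositions \ref{intersection} and \ref{twoweylplanes} with a restriction argument that converts ``membership in the base locus'' into a numerical inequality. By Proposition \ref{intersection} we have $R\cdot T\in\{0,1,3\}$, so it suffices to reach a contradiction from $R\cdot T\in\{1,3\}$. First I would put the pair in normal form: if $R\cdot T=1$, Proposition \ref{twoweylplanes} gives $w$ in the Weyl group with $w(R)=H_{123}$ and $w(T)=H_{456}$, while if $R\cdot T=3$ the Remark following Proposition \ref{twoweylplanes} gives $w$ with $\{w(R),w(T)\}=\{S_1(123),S_6(123)\}$. Since $w$ is a composition of standard Cremona transformations and permutations, it is a birational self-map of $X^4_8$, hence identifies $H^0(D)$ with $H^0(w(D))$ and carries base loci to base loci. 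After discarding any fixed exceptional components $E_I$ of $|w(D)|$ (these cannot contain the proper transform of any of the surfaces in question, since those are not contained in the exceptional locus) we may assume $D=dH-\sum_i m_iE_i$ is effective with all $m_i\ge 0$, and that $R$, $T$ are in normal form and both lie in $\mathrm{Bs}|D|$.

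Next I would exploit that a general member of $|D|$ is a general degree-$d$ hypersurface of multiplicity $m_i$ at $p_i$. Restricting to the $2$-plane $\langle p_1,p_2,p_3\rangle\cong\bbP^2$, the condition $S_1(123)=H_{123}\subseteq\mathrm{Bs}|D|$ is equivalent to emptiness of the planar linear system $(d;m_1,m_2,m_3)$. On $\bbP^2$ blown up at three general points the effective cone is generated by the six $(-1)$-curves, so $(d;a,b,c)$ with $a,b,c\ge 0$ is nonempty precisely when $d\ge\max(a,b,c)$ and $2d\ge a+b+c$; since an effective $D$ on $\bbP^4$ already forces $m_i\le d$, the emptiness must come from $m_1+m_2+m_3>2d$. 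In the case $R\cdot T=1$ the same applied to $\langle p_4,p_5,p_6\rangle$ yields $m_4+m_5+m_6>2d$, so $\sum_{i=1}^8 m_i\ge m_1+\cdots+m_6>4d$. On the other hand, intersecting a general member of $|D|$ with a rational normal quartic $Q_k$ (degree $4$, through the seven points $\ne p_k$) gives $4d\ge\sum_{i\ne k}m_i$ whenever $Q_k$ is not itself in the base locus; taking $k\in\{7,8\}$ contradicts $\sum_{i=1}^6 m_i>4d$.

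The main obstacle is the residual cases. First, one must handle the possibility that \emph{every} $Q_k$ lies in $\mathrm{Bs}|D|$, which happens only when $4d<\sum_{i\ne k}m_i$ for all $k$; here I expect to invoke a Cremona reduction, replacing $D$ by a minimal-degree representative of its Weyl orbit and arguing that the accumulated inequalities ($m_i\le d$, together with the strict inequalities forcing the $2$-planes and the $Q_k$ into the base locus) are already inconsistent, since by Corollary \ref{surfacecremonaformulas} a further Cremona would then strictly decrease $d$. Second, the case $R\cdot T=3$ requires a numerical form of $S_6(123)\subseteq\mathrm{Bs}|D|$: applying the Cremona based at $p_4,\dots,p_8$, which by Corollary \ref{surfacecremonaformulas} interchanges $S_1(123)$ and $S_6(123)$, produces a second effective divisor with \emph{both} surfaces in its base locus, and combining the two copies of the inequality $m_1+m_2+m_3>2d$ (for $d$ and for the transformed degree $d'$) with $m_i\le\min(d,d')$ should again force a degree drop, contradicting minimality. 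In all cases the computations reduce to careful bookkeeping of multiplicities along the coordinate lines $L_{ij}$ and the quartics $Q_k$ — exactly the ledger recorded in Corollary \ref{surfacecremonaformulas} and Proposition \ref{P4orbits}(b).
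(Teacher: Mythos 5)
Your opening steps coincide with the paper's: reduce to $R\cdot T\in\{1,3\}$, use Proposition \ref{twoweylplanes} to normalize the $R\cdot T=1$ case to $S_1(123),S_1(456)$, and extract the inequalities $m_1+m_2+m_3>2d$ and $m_4+m_5+m_6>2d$ (the paper imports these from \cite{cddgp} and \cite{dp1}; be aware that your ``equivalence with emptiness of the planar system $(d;m_1,m_2,m_3)$'' is itself the nontrivial point — restriction only shows that emptiness of the planar system forces containment, not the converse direction you actually need). The genuine gap is in how you turn these inequalities into a contradiction. The paper never touches the eight fixed quartics $Q_k$: it uses the \emph{covering family} of rational normal quartics through the six points $p_1,\dots,p_6$ only. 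Each member meets $D$ in $4d-\sum_{i=1}^{6}m_i<0$, so the entire family lies in the base locus, and since it sweeps out $\bbP^4$ the system $|D|$ must be empty — no residual case arises. Your variant, pairing $\sum_{i=1}^6 m_i>4d$ against $D\cdot Q_k\ge 0$ for $k\in\{7,8\}$, obliges you to handle the possibility that every $Q_k$ lies in the base locus, and your proposed fix (``minimal-degree representative \dots a further Cremona would strictly decrease $d$'') is only asserted: the inequalities you have accumulated do not visibly force $\sum_{i\in I}m_i>3d$ for any five-element set $I$, so the claimed degree drop is not established.

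The $R\cdot T=3$ case has two further problems. First, Proposition \ref{twoweylplanes} excludes this case, and the Remark after it only classifies the $3$-intersecting pairs up to \emph{permutation} into two types, $(S_1(123),S_6(123))$ and $(S_3(1,8),S_3(8,1))$; it does not supply a Weyl element carrying an arbitrary such pair to the first type. (This happens to be true — the Cremona based at $\{1,2,3,4,5\}$ sends $(S_3(1,8),S_3(8,1))$ to $(S_1(167),S_6(167))$ — but you would need to verify it, or treat the cubic pair separately, as the paper does.) Second, your numerical endgame is again only sketched: the two copies of $m_1+m_2+m_3>2d$ (for $D$ and for its image under the Cremona at $p_4,\dots,p_8$), together with $m_i\le\min(d,d')$, do not obviously produce a degree drop or any contradiction. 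The paper instead invokes the containment-multiplicity formula of \cite{bdp4} for the two cubic surfaces, adds the two inequalities to get $2\sum_{i=1}^{8}m_i>10d$, and then again exhibits a covering family — degree-$10$ curves, double at six points, simple at one and triple at one, which by Corollary \ref{P4multiplicityonly} are Weyl images of lines through a point — meeting $D$ negatively, whence $|D|=\emptyset$. The missing idea throughout is this use of covering families of Weyl-orbit curves; it is what lets the paper close both cases outright and is exactly what your two unfinished minimality arguments are trying to substitute for.
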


\begin{proof}
We argue by contradiction.
Assume first that $R \cdot T=1$.
By Proposition \ref{twoweylplanes}, we can apply a series of Cremona transformations,
which do not change the hypothesis on the base locus,
and assume that $R=S_1(123)$ and $T=S_1(456)$.
It follows from the results of \cite{cddgp}, Section 4, and \cite{dp1}, Proposition 4.2, that we therefore have
\[
m_1+m_2+m_3-2d>0
\;\;\text{and}\;\;
m_4+m_5+m_6-2d>0.
\]
Hence the system of rational normal curves of degree 4 passing through first 6 points must be in the base locus of $|D|$;
since this family of curves covers $\bbP^4$, we conclude $|D|$ is empty, a contradiction.

%If $\deg S_i+\deg S_j>2$ , then by Proposition \eqref{weyl surfaces},  there exists $w$ in Weyl group of $X^{4}_{8}$, so that $w(S_{i})=H_{123}$ and $w(S_{j})=H_{456}$. Therefore the Weyl action of the pencil of rational normal curve through first six points is in the base locus of $D$, contradicting effectivity of $D$.

If the two Weyl planes intersect in three points,
then they are either $S_3(1,8)$ and  $S_3(8,1)$ or $S_1(123)$ and $S_6(123)$ (up to permutations).
We will analyze the first case; the other is handled by a similar argument.
Assume by contradiction that both such Weyl planes are in the base locus of the linear system $|D|$ of an effective divisor $D$.
By Proposition 3 of \cite{bdp4},
the multiplicity of containment of the surface $S_3(1,8)$ in the base locus of a divisor $D$ is $2m_1+m_2+\ldots +m_7-5d<0$;
therefore since both $S_3(1,8)$ and $S_3(8,1)$ are in the base locus we obtain
$2(m_1+\ldots +m_8)-10d<0$.
This contradicts the effectivity because $2(m_1+\ldots+m_6)+m_7+3m_8-10d\leq 2(m_1+\ldots +m_8)-10d<0$;
therefore a family of curves of degree $10$ with six double points, one simple point and one triple point meets $D$ negatively,
and so is part of the base locus also. Corollary \ref{P4multiplicityonly} implies that these curves are in the Weyl orbit
of a line through a point,
and therefore again cover the projective space, a contradiction.
The remaining case can be handled by the same argument.
\end{proof}

\begin{remark}
In fact, the linear equations of pencils of curves in the base locus of the linear system of an effective divisor $D$,
that in this case are equivalent to two Weyl planes that meet in the base locus of $|D|$,
give the {\it faces of the cone of effective divisors}.
We will prove this theorem in the case of a Mori Dream Space in arbitrary dimension in \cite{DM}.
\end{remark}

\begin{remark}
In \cite{DM} we prove that a Weyl curve and a Weyl divisor that meet
can not be simultaneously in the base locus of the linear system of an effective divisor $D$.
\end{remark}

For any effective divisor $D\in Pic(X^{4}_{8})$,
define $\widetilde{D}\in Pic(\widehat{X^{4}_{8}})$
to be the proper transform of $D$ after blowing up the points, Weyl lines, and Weyl planes in the base locus of $|D|$
to obtain $\widehat{X^{4}_{8}}$.
Corollary \eqref{disj} proves that the space $\widehat{X^{4}_{8}}$ is smooth.

We remark first that the Weyl line $C$ has normal bundle $\oplus \mathcal{O}(-1)^{3}$.
If $D \cdot C <0$ then the Weyl line $C$ is in the base locus of the linear system $|D|$.
Let  $D_{(1)}$ denote the proper transform of $D$ under the blow up $Y$ of all fixed Weyl lines in $X^{4}_s$.
For each Weyl line $C$, define $k_C = -D \cdot C$.
It was proved in \cite{bdp1} that if $k_C > 0$ then
\begin{proposition}\label{prop}
If $D$ be an effective divisor on $X^4_{8}$, then
\[
h^1(X^{4}_s, \mathcal{O}_{X^{4}_s}(D)) =
\sum_{C} {{2+k_{C}}\choose 4} + h^1(Y,  \mathcal{O}_{Y}(D_{(1)}) -  h^2(Y,  \mathcal{O}_{Y}(D_{(1)}).
\]
\end{proposition}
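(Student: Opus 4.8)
The plan is to compare the cohomology of $\mathcal{O}_{X}(D)$ on $X:=X^{4}_{s}$ with that of $\mathcal{O}_{Y}(D_{(1)})$ on the blow-up $\pi\colon Y\to X$ of the (pairwise disjoint) fixed Weyl lines $C$ in the base locus of $|D|$, using the exceptional divisors as a correction term. Each such $C\cong\bbP^{1}$ is a rigid curve with normal bundle $\mathcal{O}(-1)^{\oplus3}$, and (this is the input from \cite{bdp1}) the order of vanishing of $|D|$ along $C$ equals $-D\cdot C$, so that $D_{(1)}=\pi^{*}D-\sum_{C}k_{C}E_{C}$ with $k_{C}=-D\cdot C>0$ and $E_{C}$ the exceptional divisor over $C$; set $W:=\sum_{C}k_{C}E_{C}=\pi^{*}D-D_{(1)}$. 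First I would record the short exact sequence
\[
0\longrightarrow\mathcal{O}_{Y}(D_{(1)})\longrightarrow\mathcal{O}_{Y}(\pi^{*}D)\longrightarrow\mathcal{O}_{W}(\pi^{*}D)\longrightarrow0 .
\]
Since $\pi$ is the blow-up of smooth disjoint centers in a smooth fourfold, $\pi_{*}\mathcal{O}_{Y}=\mathcal{O}_{X}$ and $R^{q}\pi_{*}\mathcal{O}_{Y}=0$ for $q>0$, so the projection formula gives $H^{i}(Y,\mathcal{O}_{Y}(\pi^{*}D))=H^{i}(X,\mathcal{O}_{X}(D))$ for all $i$; the long exact sequence of the display then reduces everything to understanding $H^{\bullet}(W,\mathcal{O}_{W}(\pi^{*}D))$.

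The heart of the argument is this local computation. Since the $E_{C}$ are disjoint it suffices to treat one of them: write $E=E_{C}$, $k=k_{C}$. From $N_{C/X}\cong\mathcal{O}(-1)^{\oplus3}$ we get $E=\bbP(N_{C/X})\cong\bbP^{1}\times\bbP^{2}$, with $\pi^{*}D|_{E}=\mathcal{O}(D\cdot C,0)=\mathcal{O}(-k,0)$ (the $\bbP^{1}$ being the base $C$) and, by a short adjunction computation, $\mathcal{O}_{E}(-E)\cong\mathcal{O}(1,1)$. Filtering $\mathcal{O}_{kE}(\pi^{*}D)$ by the infinitesimal thickenings, with successive quotients $\mathcal{O}_{E}(\pi^{*}D-jE)\cong\mathcal{O}(j-k,\,j)$ on $\bbP^{1}\times\bbP^{2}$ for $0\le j\le k-1$, K\"unneth shows each quotient has cohomology concentrated in degree $1$, of dimension $(k-1-j)\binom{j+2}{2}$. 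Climbing the filtration then gives $H^{i}(kE,\mathcal{O}(\pi^{*}D))=0$ for $i\ne1$ and
\[
h^{1}\bigl(kE,\mathcal{O}(\pi^{*}D)\bigr)=\sum_{j=0}^{k-1}(k-1-j)\binom{j+2}{2}=\binom{k+2}{4},
\]
the last step being the convolution identity $\sum_{i}\binom{i}{1}\binom{n-i}{2}=\binom{n+1}{4}$ with $n=k+1$. Summing over $C$: $H^{0}(W,\mathcal{O}_{W}(\pi^{*}D))=0$, $H^{i}(W,\mathcal{O}_{W}(\pi^{*}D))=0$ for $i\ge2$, and $h^{1}(W,\mathcal{O}_{W}(\pi^{*}D))=\sum_{C}\binom{k_{C}+2}{4}$.

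Plugging this in, the long exact cohomology sequence collapses to the five-term exact sequence
\begin{multline*}
0\to H^{1}(Y,\mathcal{O}_{Y}(D_{(1)}))\to H^{1}(X,\mathcal{O}_{X}(D))\to\bigoplus_{C}H^{1}(k_{C}E_{C},\mathcal{O}(\pi^{*}D))\\
\to H^{2}(Y,\mathcal{O}_{Y}(D_{(1)}))\to H^{2}(X,\mathcal{O}_{X}(D))\to0,
\end{multline*}
and counting dimensions with alternating signs gives
\[
h^{1}(X,\mathcal{O}_{X}(D))=\sum_{C}\binom{k_{C}+2}{4}+h^{1}(Y,\mathcal{O}_{Y}(D_{(1)}))-h^{2}(Y,\mathcal{O}_{Y}(D_{(1)}))+h^{2}(X,\mathcal{O}_{X}(D)).
\]
Finally I would dispatch the term $h^{2}(X^{4}_{s},\mathcal{O}_{X}(D))$, which vanishes for effective $D$: removing the fixed exceptional components reaches a class $D'=dH-\sum_{i}m_{i}E_{i}$ with all $m_{i}\ge0$, without changing $h^{2}$ (because $H^{1}$ of every line bundle on $E_{i}\cong\bbP^{3}$ vanishes), and for such $D'$ one has $R^{q}\pi'_{*}\mathcal{O}_{X}(D')=0$ for $q>0$ and $\pi'_{*}\mathcal{O}_{X}(D')=\mathcal{I}_{Z}(d)$ (with $\pi'\colon X\to\bbP^{4}$ and $Z$ the zero-dimensional fat-point scheme), so $h^{2}(X,\mathcal{O}_{X}(D'))=\dim H^{2}(\bbP^{4},\mathcal{I}_{Z}(d))=\dim H^{1}(Z,\mathcal{O}_{Z})=0$. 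The only genuine work is the local computation on the thickened exceptionals $k_{C}E_{C}$ (with its combinatorial identity) and the Nagata-type input from \cite{bdp1} identifying the multiplicity of $|D|$ along $C$ with $-D\cdot C$, so that $D_{(1)}$ really is $\pi^{*}D-\sum_{C}k_{C}E_{C}$; given those, the rest is formal homological algebra.
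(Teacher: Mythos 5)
Your argument is correct, and it is worth noting that the paper itself does not prove Proposition \ref{prop} — it quotes the statement from \cite{bdp1} — so your write-up is in effect a reconstruction of that proof, and it follows the standard route: blow up the pairwise disjoint Weyl lines, compare $\mathcal{O}_Y(D_{(1)})$ with $\pi^*\mathcal{O}_X(D)$ via the thickenings $k_CE_C$, filter, and apply K\"unneth on $E_C\cong\bbP^1\times\bbP^2$ together with the convolution identity giving $\binom{k_C+2}{4}$; your identification $\mathcal{O}_{E_C}(-E_C)\cong\mathcal{O}(1,1)$ is consistent with the paper's own Chow computation $E_{ij}^2=-(G_{ij}+F_{ij})$ in Proposition \ref{X43}. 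You also correctly isolated the two inputs the statement silently uses: the identification of the multiplicity of $|D|$ along $C$ with $-D\cdot C$ (so that $D_{(1)}=\pi^*D-\sum_Ck_CE_C$), which is legitimately cited from \cite{bdp1}/\cite{dp1}, and the extra term $h^2(X,\mathcal{O}_X(D))$ produced by the five-term sequence, which you dispose of by reducing to $H^2(\bbP^4,\mathcal{I}_Z(d))=0$; that vanishing step (using that intermediate cohomology of line bundles on $E_i\cong\bbP^3$ and on $\bbP^4$ vanishes and that $Z$ is zero-dimensional) is sound, so I see no gap.
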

A general form of Proposition \ref{prop} for $(-1)$ curves in arbitrary dimenison will be given in \cite{DM}.
We conclude that if $k_C\geq 2$ then $h^1(X^{4}_{s}, \mathcal{O}_{X^{4}_{s}}(D)) \geq 1+ h^1(Y,  \mathcal{O}_{Y}(D_{(1)}) -  h^2(Y,  \mathcal{O}_{Y}(D_{(1)}) $.

\begin{conjecture}\label{conj2} Let $D$ be an effective divisor on $X^{4}_s$,
with $H^1(X^{4}_s, \mathcal{O}_{X^{4}_s}(D))=0$.
Then $D \cdot C\geq -1$ for any Weyl line $C$.
\end{conjecture}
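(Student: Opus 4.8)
The plan is to argue by contraposition: assume $D$ is an effective divisor on $X^{4}_s$ with $D\cdot C=-k$ and $k\ge 2$ for some Weyl line $C$, and deduce $h^1(X^{4}_s,\mathcal{O}(D))\ge 1$. The first step is a normalization. All Weyl lines form a single orbit under the Weyl group (by definition), and every element of the Weyl group is a composition of standard Cremona transformations and permutations, which by the resolution described in Section 2 carry $\mathcal{O}(D)$ to $\mathcal{O}(\phi(D))$ preserving effectivity and the dimensions of all cohomology groups (the points being general). Replacing $D$ by a Weyl translate I may therefore assume $C$ is one of the distinguished Weyl lines of Proposition \ref{P4orbits}(a) --- a coordinate line $L_{ij}$, or a rational normal quartic $Q_k$ when $7\le s\le 8$. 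In particular $C$ has normal bundle $\mathcal{O}_C(-1)^{\oplus 3}$, so Proposition \ref{prop} (due to \cite{bdp1}) applies with $k_C=k$.

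The second step reduces the statement to a single cohomological inequality. By Proposition \ref{prop},
\[
h^1\bigl(X^{4}_s,\mathcal{O}(D)\bigr)\;=\;\sum_{C}\binom{2+k_{C}}{4}\;+\;h^1\bigl(Y,\mathcal{O}_Y(D_{(1)})\bigr)\;-\;h^2\bigl(Y,\mathcal{O}_Y(D_{(1)})\bigr),
\]
the sum being over the Weyl lines $C$ with $k_C>0$, $Y\to X^{4}_s$ the blow-up of those Weyl lines, and $D_{(1)}$ the proper transform. Each summand $\binom{2+k_{C}}{4}$ is non-negative, and the one attached to our chosen $C$ equals $\binom{2+k}{4}\ge\binom{4}{4}=1$; hence
\[
h^1\bigl(X^{4}_s,\mathcal{O}(D)\bigr)\;\ge\;1+h^1\bigl(Y,\mathcal{O}_Y(D_{(1)})\bigr)-h^2\bigl(Y,\mathcal{O}_Y(D_{(1)})\bigr).
\]
It therefore suffices to prove the inequality
\[
h^2\bigl(Y,\mathcal{O}_Y(D_{(1)})\bigr)\;\le\;h^1\bigl(Y,\mathcal{O}_Y(D_{(1)})\bigr).
\]

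The main obstacle is exactly this last inequality; controlling the $h^2$ term is what keeps the statement at the level of a conjecture. I would pursue two routes. The direct one is to establish the vanishing $h^2(Y,\mathcal{O}_Y(D_{(1)}))=0$: once all Weyl lines have been blown up, $D_{(1)}$ is no longer forced to be negative along a $(-1)$-curve, and one would try to show $D_{(1)}-K_Y$ is nef and big and invoke Kawamata--Viehweg vanishing --- the difficulty being that $D$ itself may be small, so one must first peel off any Weyl divisor occurring as a fixed component and argue on the residual part. The structural route is to iterate the mechanism of Proposition \ref{prop} one codimension higher: by Corollary \ref{disj} the Weyl planes in the base locus of $|D|$ are pairwise disjoint in $X^{4}_8$, so their blow-up $\widehat{X^{4}_8}$ is smooth, and one expects a companion formula writing $h^2(Y,\mathcal{O}_Y(D_{(1)}))$ as a sum of non-negative contributions of these Weyl planes --- each a function of the plane's multiplicity of containment, a linear form in $d$ and the $m_i$ of the type $m_i+m_j+m_k-2d$ as in Corollary \ref{disj} and \cite{bdp4} --- plus a correction $h^2(\widehat{X^{4}_8},\widehat D)-h^3(\widehat{X^{4}_8},\widehat D)$. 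Since the top cohomology $h^4$ of (the transform of) an effective divisor vanishes on all of these blow-ups by Serre duality, the iteration terminates, and $h^1(X^{4}_s,\mathcal{O}(D))$ becomes an alternating sum of non-negative subvariety contributions plus the higher cohomology of the fully resolved proper transform. The remaining task --- showing that this higher cohomology never exceeds the accumulated contributions, so that $h^1=0$ forces every $k_C$ to be $\le 1$ --- is, for $s\le 8$, expected to be checkable from the Mori-dream-space structure of $X^{4}_8$ together with the classification of Weyl lines, planes, and divisors in Proposition \ref{P4orbits} and \cite{bdp4}, whereas for $s\ge 9$ it is essentially the full dimensionality problem and would need genuinely new input.
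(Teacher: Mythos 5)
The statement you were asked about is a \emph{conjecture} in the paper: the authors give no proof of it, and in fact the discussion immediately preceding it shows exactly where the difficulty sits. Your proposal does not close that difficulty. The contrapositive reduction you carry out --- normalize $C$ by the Weyl action, apply Proposition \ref{prop}, and note that a Weyl line with $k_C\ge 2$ contributes $\binom{2+k_C}{4}\ge 1$ --- reproduces verbatim the inequality the paper already records just before stating the conjecture, namely $h^1(X^4_s,\mathcal{O}(D))\ge 1+h^1(Y,\mathcal{O}_Y(D_{(1)}))-h^2(Y,\mathcal{O}_Y(D_{(1)}))$. Everything then hinges on the inequality $h^2(Y,\mathcal{O}_Y(D_{(1)}))\le h^1(Y,\mathcal{O}_Y(D_{(1)}))$, and neither of your two routes to it is carried out: the Kawamata--Viehweg route is only sketched (you do not produce a nef and big $D_{(1)}-K_Y$, nor an argument for peeling off fixed Weyl divisors), and the ``iterate Proposition \ref{prop} in codimension two'' route invokes a companion formula for Weyl-plane contributions that does not exist in the paper or, as far as the paper indicates, in the literature; moreover Corollary \ref{disj} and Proposition \ref{P4orbits} are results about $s=8$, while the conjecture is stated for arbitrary $s$, where there are infinitely many Weyl lines and no Mori-dream-space structure to fall back on. So what you have is a correct restatement of the reduction the authors themselves make, plus an honest research plan --- not a proof.

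Two smaller points in the normalization step also need care. First, the Weyl group acts on $X^4_s$ only by pseudo-automorphisms (isomorphisms in codimension one coming from the resolved Cremona maps of Section 2), so while effectivity and $h^0$ are preserved, the asserted invariance of \emph{all} cohomology dimensions, in particular of $h^1$, is not automatic and would itself require an argument. Second, the classification of Weyl lines as the $L_{ij}$ and the quartics $Q_k$ (Proposition \ref{P4orbits}(a)) is specific to $s\le 8$; for larger $s$ your normalization ``assume $C$ is a coordinate line or an RNQ'' is not available, even though the conjecture is formulated for general $s$.
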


\begin{remark}
For arbitrary number of points $s$, the converse of Conjecture \eqref{conj2} is not true.
Indeed, take $D:=4H -2\sum_{i=1}^{14} E_i \in Pic(X^4_{14})$.
We can see that $D \cdot C\geq 0$ for any Weyl line $C$; however the Alexander Hirschowitz Theorem implies that

$$h^1(X^{4}_s, \mathcal{O}_{X^{4}_s}(D))=1.$$

\end{remark}

For every $r$-subset $I(r)$ of the indices $\{1,\ldots, 8\}$, let $L_{I(r)}$ be the linear span of the corresponding points.
Let $k_{w(L_{I(r)})}$ be the multiplicity of containment of the Weyl cycle $w(L_{I(r)})$
in the base locus of $D$, for a Weyl group element $w$.
In \cite{bdp4} the{\it  Weyl expected dimension} for an effective divisor $D$ was introduced as

$$
wdim(D):=
\chi(D)+ \sum_{r= 1}^{3}\sum_{I(r) \in \{1,\ldots, 8\}} \sum_{w\in W} (-1)^{r+1} {{4+k_{w(L_{I(r)})}-r-1}\choose 4}.
$$

Morever, in \cite{bdp4} it was conjectured that for $D$ every effective divisor on $\widehat{X^{4}_{8}}$,  then dimension of space of global section of $D$ equals the {\it Weyl expected dimension}.

\begin{conjecture} \label{conj} Let $D$ be an effective divisor on  $\widehat{X^{4}_{8}}$.
\begin{enumerate}
\item If $D \cdot C\geq -1$ for all Weyl curves $C$ then $$H^1(X^{4}_s, \mathcal{O}_{X^{4}_s}(D))=0.$$
\item $
h^{0}(D)=wdim (D) + \sum_{r=1}^{3}(-1)^{r+1} h^r(\widetilde{D}).
$
\item For every $r\geq 1$,  $h^r(\widetilde{D})=0$. 
\item Moreover, $\widetilde{D}$ is globally generated on $\widehat{X^{4}_{8}}$.
\end{enumerate}
\end{conjecture}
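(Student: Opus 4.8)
The plan is to establish all four assertions at once by moving the question onto the smooth model $\widehat{X^4_8}$, on which $|D|$ should become base-point-free, and then transporting cohomological information back down through the blow-downs. Everything is first reduced to a finite list of classes: since the Weyl group $W$ of $X^4_8$ is generated by permutations of the points and the standard Cremona $\phi$ of Proposition \ref{CremonaP4}, whose effect on the degrees and multiplicities of divisors, surfaces and curves is given explicitly in Corollary \ref{P4multiplicityonly} and Corollary \ref{surfacecremonaformulas}, I would first move the effective class $D$ to a Cremona-minimal representative $D_0$ (one on which every admissible five-point Cremona is degree-non-decreasing). Because $X^4_8$ is a Mori dream space, the set of such $D_0$ is finite modulo the $S_8$-action, and I would enumerate it. Since $W$ preserves $\chi$, the Dolgachev--Mukai pairing, the classification of Weyl cycles (Proposition \ref{P4orbits}), the multiplicities of containment and hence $\mathrm{wdim}$, it suffices to prove the conjecture for each minimal $D_0$.

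For a minimal $D_0$ I would next read off its base locus precisely: the Weyl curves $C$ with $D_0\cdot C<0$, the points $p_i$, and the Weyl planes with negative containment multiplicity (computed via the linear functionals of \cite{bdp4} used in the proof of Corollary \ref{disj}). By Corollary \ref{disj} the Weyl planes in the base locus are pairwise disjoint, so --- as observed just after that corollary --- the blow-up $\widehat{X^4_8}$ of $X^4_8$ along the base locus is smooth; one also checks that after the point blow-ups the Weyl curves in the base locus are mutually disjoint, so that $\widetilde D_0$ on $\widehat{X^4_8}$ is genuinely the residual system with the ``expected'' base locus removed and nothing forced to remain on dimensional grounds.

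The crux is part (4): that $\widetilde D_0$ is globally generated on $\widehat{X^4_8}$, equivalently that the expected base locus is the whole base locus. This is the exact four-dimensional analogue of the Gimigliano--Harbourne--Hirschowitz phenomenon, and it is the step I expect to be genuinely hard. I would attack it by (a) restricting $\widetilde D_0$ to the exceptional divisors $E_i\cong\bbX^3_2$, $E_{ij}$, $E_{ijk}$ and to the Weyl-plane exceptional divisors, using adjunction together with the ring structure of $A^*(\bbX^4_3)$ in Proposition \ref{X43} to reduce to lower-dimensional global-generation statements, into which one can feed the planar GHH theorem and the $\bbP^3$ picture of Laface--Ugaglia; and (b) for the finitely many genuinely new minimal systems, degenerating $\bbP^4$ together with the eight points (a Ciliberto--Miranda-type limit-linear-system argument) to split $\widetilde D_0$ into components whose global generation is visible and then matching dimensions. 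Lacking a uniform argument, the Mori-dream-space structure at least makes a finite, if laborious, case check possible in principle.

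Granting Step (4), parts (1)--(3) follow more routinely. Global generation makes $\widetilde D_0$ nef, so part (3), $h^r(\widehat{X^4_8},\widetilde D_0)=0$ for $r\ge1$, would come from Kawamata--Viehweg once $\widetilde D_0-K_{\widehat{X^4_8}}$ is checked to be nef and big (with $K_{\widehat{X^4_8}}$ computed from the blow-up data), and from the restriction sequences of Step (3) in the residual boundary cases; global generation also gives $h^1(\widehat{X^4_8},\widetilde D_0)=0$ and $h^0(X^4_8,D_0)=h^0(\widehat{X^4_8},\widetilde D_0)$, since removing fixed components loses no sections. It then remains to verify, by Hirzebruch--Riemann--Roch on $\widehat{X^4_8}$, that $\chi(\widehat{X^4_8},\widetilde D_0)$ equals $\chi(X^4_8,D_0)$ minus exactly the binomial corrections that appear in the definition of $\mathrm{wdim}$; combined with $h^4(\widetilde D_0)=0$ this careful but mechanical bookkeeping yields part (2) (which holds even without part (3), the terms $h^r(\widetilde D)$ absorbing any residual higher cohomology), and, via the generalized form of Proposition \ref{prop}, part (1): on the minimal list, $D_0\cdot C\ge-1$ for every Weyl curve forces $k_C\le1$, hence $\binom{2+k_C}{4}=0$, so all curve-level (and, one checks on the list, the consequent plane-level) corrections vanish and $h^1(X^4_8,\mathcal O(D))=0$. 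The single real obstruction is Step (4); the rest is reduction and bookkeeping.
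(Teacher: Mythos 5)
The statement you are addressing is not a theorem of the paper at all: it is stated there as Conjecture \ref{conj}, and the paper offers no proof (it only records that the statement holds for $X^{4}_{6}$ by citing earlier work on $X^{n}_{n+2}$, and that the analogous statement already fails on $\widehat{X^{4}_{10}}$). Your text is likewise not a proof but a research program, and you say so yourself: the entire content of the conjecture is concentrated in your Step (4) (that after removing the Weyl base locus the residual system $\widetilde{D}$ is globally generated, i.e.\ that Weyl cycles account for the whole base locus), and you explicitly leave it as ``genuinely hard,'' with no uniform argument and only a case check ``in principle.'' This is precisely the four-dimensional analogue of the open Gimigliano--Harbourne--Hirschowitz problem, so nothing short of a genuinely new idea closes that gap; a reduction plus an admission is not a proof.

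Beyond that central gap, several of your reduction steps would not survive scrutiny. The claim that Cremona-minimal effective classes $D_{0}$ form a finite set modulo $S_{8}$ is false: being a Mori dream space gives a finitely generated effective cone (finitely many Weyl orbits of extremal rays), not finitely many minimal lattice classes --- the fundamental chamber contains infinitely many effective classes of unbounded degree, so the proposed enumeration cannot even start. The appeal to Kawamata--Viehweg for part (3) requires $\widetilde{D}-K_{\widehat{X^{4}_{8}}}$ to be nef and big, which is neither checked nor generally true on these iterated blow-ups, and nefness of $\widetilde{D}$ alone does not give higher-cohomology vanishing. Finally, for part (1) you invoke Proposition \ref{prop}, but that formula only relates $h^{1}$ on $X^{4}_{s}$ to $h^{1}$ and $h^{2}$ of the once-blown-up model $Y$; when $D\cdot C\ge -1$ all the binomial corrections vanish and the formula gives no vanishing conclusion --- you would again need exactly the conjectural vanishing you are trying to prove. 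So the proposal should be regarded as a plausible strategy outline consistent with the paper's expectations, not as a solution: the key steps are either missing or rest on incorrect finiteness and positivity claims.
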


We remark that Conjecture \eqref{conj} part (2) implies $wdim(D)=\chi(\widetilde{D})$, while part (3) implies that conjecture of \cite{bdp4} regarding dimension $h^0(D)$ is true.

\begin{remark}
We remark that Conjecture \ref{conj} holds for effective divisors on $X^{n}_{n+2}$
\cite{bdp1}, \cite{dp1}, \cite{dp2};
therefore it holds for $X^{4}_{6}$.
Notice that $\widehat{X^{4}_{9}}$ is not a Mori Dream Space and in fact, there are infinitely many Weyl lines.
The authors believe that the Conjecture \ref{conj} also holds for $\widehat{X^{4}_{9}}$
with a similar construction for the Weyl planes as the one presented here. 
\end{remark}

\begin{remark}
Conjecture \ref{conj} fails in $\widehat{X^{4}_{10}}$,
because for arbitrary number of points, in non Mori-dream spaces Weyl cycles are not the only obstructions.
Indeed, consider the divisor
$$D:=4H-4E_1-2\sum_{i=2}^{10} E_i$$.
We remark that $D$ contains in the base locus of its linear system just double lines $k_{L_{1i}}=2$;
therefore its proper transform under the blow up of all its Weyl base locus (i.e. only lines) is
$$\widehat{D}:=4H- 4E_1- 2\sum_{i=2}^{10} E_i -2 \sum_{i=2}^{10} E_{1i}$$ 
Moreover, since $k_{L_{1i}}=2$ we have
$$\chi(D)={{4+4}\choose 4}-{{4+4-1}\choose 4} - 9 {{4+2-1}\choose 4}  =70-35-45=-10$$
$$wdim(D)=\chi(\widehat{D})=\chi(D)+9{{2+2}\choose 4}=-1$$
However, this divisor is effective,
and in fact the Alexander - Hirschowitz theorem implies that it is unique in its linear system.
We conclude that $h^{0}(D)=1\neq 0=wdim(D)$, therefore $h^1(\widehat{D})=1$.
\end{remark}

\end{document}